\documentclass[12pt,a4paper]{article}
\usepackage[utf8]{inputenc}
\usepackage[T1]{fontenc}

\usepackage{fancyhdr}
\usepackage{lipsum}

\usepackage{amsthm}
\newtheorem{theorem}{Theorem}
\newtheorem{corollary}[theorem]{Corollary}

\theoremstyle{definition}

\usepackage{amsfonts}
\usepackage[figuresright]{rotating}
\usepackage{amssymb}
\usepackage{amsmath}
\usepackage{graphics}
\usepackage{enumerate}
\usepackage{tikz}
\usepackage{caption}
\usepackage{subcaption}
\usepackage{float}
\usepackage{comment}
\DeclareMathOperator{\QEC}{QEC}

\newcommand{\RE}{\mbox{\rm Re\,}}

\newcommand{\norm}[1]{\left\| #1\right\|}

\title{Quadratic Embedding of Theta Graphs via Reproducing Kernel Hilbert Spaces}
\author{Marek Skrzypczyk\thanks{Faculty of Mathematics and Computer Science, Jagiellonian University, {\L}ojasiewicza 6, 30-348 Krak\'{o}w, Poland. E-mail: {\tt marekskrzypczyk1@o2.pl}.}}

\begin{document}
\maketitle
\date{}

\begin{abstract}
The quadratic embedding property of graphs consisting of three paths (theta graphs) is fully characterised. For this aim,  a theorem by Winkler (1985)  is utilized. An alternative proof of that result using the RKHS technique is presented.
\end{abstract}

\smallskip
\noindent \textbf{Keywords} distance matrix, quadratic embedding, primary non-QE graph, reproducing kernel Hilbert space\\ \\
\noindent \textbf{MSC} primary: 05C50 \quad secondary: 05C12,\ 46E22

\medskip\medskip

\emph{Dedicated to the memory of Professor Franciszek Hugon Szafraniec}

\section{Introduction}

Let $G=(V,E)$ be a connected graph (finite and infinite). Denote by $d(x,y)$ the graph distance between two vertices $x,y\in V$, i.e., the length of a shortest walk connecting $x$ and $y$. A \textit{quadratic embedding} of a connected graph $G=(V,E)$ in a Hilbert space $\mathcal{H}$ is a map $\phi: V \to \mathcal{H}$ satisfying
$$||\phi(x) - \phi(y)||^2=d(x,y), \quad x,y \in V, $$
where $||\cdot||$ is the norm of the Hilbert space $\mathcal{H}$. A graph $G$ is said to be of \emph{QE class} if it admits a quadratic embedding, otherwise it is said to be of \emph{non-QE class}. Schoenberg showed that the graph is of QE class if and only if its distance matrix $D=[d(x,y)]_{x,y \in V}$ is conditionally negative definite \cite{Sch2}. For bipartite graphs, a quadratic embedding is equivalent to an isometric embedding into a hypercube \cite{Djokovic,RothWinkler,Tan}. 

The concept of quadratic embedding can be applied not only to graphs but also to metric spaces in general. It has been studied largely in conjunction with Euclidean distance geometry \cite{Alfakih,Bal,Bapat,Jak1,Jak2,Lib,Maehara}. Moreover, a quadratic embedding has also appeared in harmonic analysis, for example, in the study of the Kazhdan property $(T)$ of groups \cite{Boz2, BozDolEjsGal, BozJanSpa, Haagerup, Obata5, Obata6}, as well as in quantum probability \cite{Boz2, BozJanSpa, Hora1, Obata5, Obata6}.

Consider two graphs $H=(V',E')$ and  $G=(V,E)$. Then $H$ is called a subgraph of $G$ if $V'\subseteq V $ and $E'\subseteq E$. Assume that both graphs $H$ and $G$ are connected, with graph distances denoted respectively by $d_H$ and $d_G$.  We say that $H$ is \emph{isometrically embedded} in $G$ if for each pair of vertices of $V'$ holds 
	$$d_H (x,y)= d_G (x,y).$$
Assume that $H=(V',E')$ is isometrically embedded subgraph of $G=(V,E)$. It is easy to observe: if $H$ is of non-QE class, then $G$ is of non-QE class as well. Hence, Obata divided non-QE graphs into two subclasses. A graph of non-QE class is called \emph{primary} if it contains no isometrically embedded proper subgraphs of non-QE class \cite{Obata1}. Otherwise, it is called \textit{non-primary}. All primary non-QE graphs on at most 6 vertices are known \cite{ObataPrim6v}. Furthermore, Obata in \cite{Obata1} showed all primary non-QE complete multipartite graphs. 

The natural problem is whether primary non-QE graphs exist for each number of vertices greater than or equal to 5. In \cite{MSWtheta}, Młotkowski, Skrzypczyk, and Wojtylak provided a positive answer to this question, defining \emph{theta graphs} $\Theta(\alpha,\beta,\gamma)$ as follows. Let $P_{n+1}=[x_0,\dots,x_n]$ denote the path graph, i.e., a graph with $n+1$ vertices $\{x_0,\ldots,x_n\}$ and $n$ edges $\{x_{i-1},x_i\}$ ($i=1,\ldots,n$). For given integers $1\leq \alpha,\beta,\gamma$, with at most one of them equal $1$,
we define a \textit{theta graph} $\Theta(\alpha,\beta,\gamma)$ by taking three path graphs 
$$
P_{\alpha+1}=  [x_0,\ldots,x_{\alpha}] ,\ 
P_{\beta+1}=   [y_0,\ldots,y_\beta],\ 
P_{\gamma+1}= [z_0,\ldots,z_\gamma],\ 
$$
with the same endpoints $x_0=y_0=z_0$,  $x_{\alpha}=y_{\beta}=z_{\gamma}$ and the remaining vertices $x_j$ ($j=1,\dots,\alpha-1$), $y_j$  ($j=1,\dots,\beta-1$) and $z_{j}$ ($j=1,\dots,\gamma-1$) being mutually different.
A theta graph $\Theta(\alpha,\beta,\gamma)$ has $\alpha+\beta+\gamma-1$ vertices, $\alpha+\beta+\gamma-1$ edges, and is bipartite if and only if $\alpha,\beta,\gamma$ are of the same parity. The graph $\Theta(2,3,5)$ is presented in Figure \ref{ex235}.

\begin{figure}[H]
\centering
\begin{tikzpicture}[x=12cm,y=12cm]
\tikzset{     
    e4c node/.style={circle,fill=black,minimum size=0.25cm,inner sep=0}, 
    e4c edge/.style={line width=1.5pt}
  }
  \node[e4c node] (1) at (0.15, 0.85) {};
   \node[anchor= south] at (0.15, 0.86) {$x_2=y_{3}=z_{5}$};
  \node[e4c node] (2) at (0.05, 0.70) {};
   \node[anchor= east] at (0.05, 0.70) {$x_1$};
  \node[e4c node] (3) at (0.15, 0.55) {};
  \node[anchor= north] at (0.15, 0.54) {$x_0=y_{0}=z_{0}$};
  \node[e4c node] (4) at (0.25, 0.77) {}; 
  \node[anchor= north east] at (0.25, 0.77) {$y_2$};
  \node[e4c node] (5) at (0.25, 0.63) {};
  \node[anchor= south east] at (0.25, 0.63) {$y_1$};
  \node[e4c node] (6) at (0.30, 0.80) {};
  \node[anchor= south west] at (0.30, 0.80) {$z_4$};
  \node[e4c node] (7) at (0.45, 0.75) {};
  \node[anchor= south west] at (0.45, 0.75) {$z_3$};
  \node[e4c node] (8) at (0.45, 0.65) {};
  \node[anchor= north west] at (0.45, 0.65) {$z_2$};
 \node[e4c node] (9) at (0.30, 0.60) {};
 \node[anchor= north west] at (0.30, 0.60) {$z_1$};

  \path[draw,thick]
  (1) edge[e4c edge]  (2)
  (1) edge[e4c edge]  (4)
  (2) edge[e4c edge]  (3)
  (4) edge[e4c edge]  (5)
  (5) edge[e4c edge]  (3)
  (1) edge[e4c edge]  (6)
  (6) edge[e4c edge]  (7)
  (7) edge[e4c edge]  (8)
  (8) edge[e4c edge]  (9)
  (9) edge[e4c edge]  (3)
  ;

\end{tikzpicture}

\caption{The graph $\Theta(2,3,5)$} \label{ex235}
\end{figure}
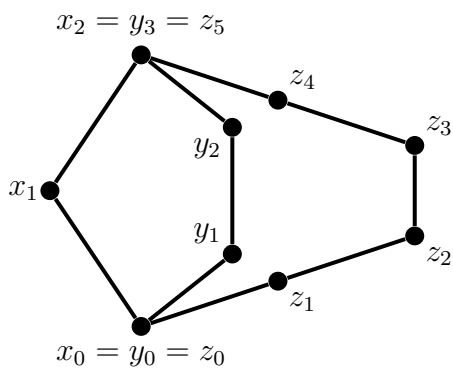

Assume that $1\le\alpha\le\beta\le\gamma$, $\beta\ge2$. It was shown in \cite{MSWtheta} that $\Theta(1,\beta,\gamma)$ is of QE class if either $\beta=2$, or $\beta=3$, or $\beta$ and $\gamma$ are odd. As well as $\Theta(\alpha,\beta,\gamma)$ is of non-QE class if either $\alpha=\beta=2$, or $\alpha=2$, $\beta=3$, $\gamma$ is even, or $\alpha=2$, $\beta\ge4$, or $\alpha\ge3$. Moreover, each theta graph of non-QE class is also primary, hence these results imply that for each number of vertices greater than or equal to 5, there exists a primary non-QE graph.

To complete the characterization of theta graphs $\Theta(\alpha,\beta,\gamma)$ according to the QE property, we need to verify the cases when
\begin{itemize}
\item $\alpha=1$, $4\le\beta\le\gamma$ and at least one of $\beta$, $\gamma$ is even, and
\item $\alpha=2$, $3=\beta\le\gamma$ and $\gamma$ is odd.
\end{itemize}

In this paper, we show that every theta graph $\Theta(1,\beta,\gamma)$ is of QE class, as well as theta graphs $\Theta(2,3,\gamma)$ are of QE class for $\gamma\in \{3,5,7\}$. Thus, we obtain the characterization of theta graphs $\Theta(\alpha,\beta,\gamma)$ according to the QE property.

\begin{theorem}\label{mainthm}
Assume that $1\le\alpha\le\beta\le\gamma$, $\beta\ge2$.
Then $\Theta(\alpha,\beta,\gamma)$ is of QE class if and only if either
\begin{itemize}
\item $\alpha=1$, or
\item $\alpha=2$, $\beta=3$ and $\gamma\in \{3,5,7\}$.
\end{itemize}
\end{theorem}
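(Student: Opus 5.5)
The plan is to combine the results of \cite{MSWtheta} quoted above with new positive results for the two remaining families. The non-QE direction and part of the QE direction are already settled there. $\Theta(\alpha,\beta,\gamma)$ is non-QE when $\alpha\ge3$, when $\alpha=2,\beta=2$, when $\alpha=2,\beta\ge4$, and when $\alpha=2,\beta=3$ with $\gamma$ even. It is QE when $\alpha=1$ and either $\beta\in\{2,3\}$ or $\beta,\gamma$ are both odd. What remains is to show: (a) $\Theta(1,\beta,\gamma)$ is QE for $4\le\beta\le\gamma$ with at least one of them even; (b) $\Theta(2,3,\gamma)$ is QE for $\gamma\in\{3,5,7\}$; and (c) $\Theta(2,3,\gamma)$ is non-QE for odd $\gamma\ge9$.

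For (a), I would use Winkler's theorem on metric spaces that decompose along a cut. $\Theta(1,\beta,\gamma)$ is the union of two cycles $C_{\beta+1}$ and $C_{\gamma+1}$ glued along the edge $x_0x_1$. The relevant form of the theorem, which I would also reprove, says that gluing two QE graphs along a common isometric edge (a $K_2$ gate) gives a QE graph. In the RKHS proof one takes embeddings $\phi_1,\phi_2$ of the two cycles (cycles are QE) and normalizes both so that $\phi_i(x_0)=0$ and $\phi_i(x_1)=e$ for a fixed unit vector $e$. One then checks that every shortest path between vertices on different sides passes through $x_0$ or $x_1$, so that
\[
d(u,v)=\min\bigl(d(u,x_0)+d(x_0,v),\, d(u,x_1)+d(x_1,v)\bigr).
\]
Next, one builds a kernel on the union from the kernels $K_i(u,w)=\tfrac12\bigl(d(u,x_0)+d(w,x_0)-d(u,w)\bigr)$. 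The key point is that the cross terms decompose through the two-dimensional span coming from $x_0,x_1$. Positive definiteness of the glued kernel then reduces to positive definiteness on each side, with a one-dimensional correction along $e$ that is handled using the fact that each distance from $u$ to the edge endpoints differs by exactly one or zero.

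For (b), the graphs have only $9$, $11$ and $13$ vertices, so I would verify conditional negative definiteness directly. Using the two symmetries of $\Theta(2,3,\gamma)$ (swapping the two endpoints, and reflecting each path), I would block-diagonalize the distance matrix restricted to $\mathbf 1^\perp$. Equivalently, I would compute the QE constant and show it is $\le0$, which comes down to a few small explicit eigenvalue checks. For (c), I would exhibit an explicit vector $f$ with $\sum f=0$ and $\langle Df,f\rangle>0$, supported on $x_1$, $y_1$, $y_2$ and the vertices of the $z$-path near its middle. The coefficients would be chosen to mimic the violating vectors used for even $\gamma$ in \cite{MSWtheta}, with their parameters tuned so that the quadratic form, as a function of $\gamma$, becomes positive exactly from $\gamma=9$ on.

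I expect the main obstacle to be (c), the threshold between $\gamma=7$ and $\gamma=9$. It needs a family of test vectors that works uniformly in $\gamma$, together with a matching sharp check that fails for $\gamma\le7$. I would first try test vectors depending linearly on the position along the long path, and compute $\langle Df,f\rangle$ as a polynomial in $\gamma$.
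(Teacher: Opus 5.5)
Your reduction to the known cases plus (a), (b), (c) is the same as the paper's. The gluing lemma you plan to use for (a), however, is false, and it fails in exactly the case that matters. If by ``gate'' you mean a gated set, the lemma is true, but it does not apply here: $x_0x_1$ is not gated in an odd cycle.

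Here is why the lemma fails. Normalize $\phi_i(x_0)=0$, $\phi_i(x_1)=e$, and write $\phi_1(u)=t_ue+p_u$, $\phi_2(w)=s_we+q_w$ with $p_u,q_w\perp e$.
\begin{itemize}
\item If you take the $p$'s orthogonal to the $q$'s, you get $\|\phi_1(u)-\phi_2(w)\|^2=d(u,w)$ for every cross pair except one kind. When both $u$ and $w$ are equidistant from $x_0$ and $x_1$, you get $d(u,w)-\frac12$ instead.
\item To repair this you need $\langle p_u,q_w\rangle=-\frac14$ on exactly those pairs and $0$ on all other cross pairs.
\item Each side's Gram matrix is fixed by its metric, so this is a genuine constraint, and it can fail.
\end{itemize}
For example, the wheel $W_4$ (hub $c$, rim $a_1b_1a_2b_2$) and the triangle $c\,a_1z$ are both QE. In their union along $ca_1$, the relation $p_{a_2}=p_{b_1}+p_{b_2}$ is incompatible with the required cross products. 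Concretely, $f=(f_c,f_{a_1},f_{a_2},f_{b_1},f_{b_2},f_z)=(1,2,2,-2,-2,-1)$ has $\sum f=0$ and $\langle f,Df\rangle=2>0$. So there is no ``one-dimensional correction along $e$''. The defect lives orthogonally to $e$, and the fact that $d(u,x_1)-d(u,x_0)\in\{0,\pm1\}$ cannot control it.

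As a result, your argument only proves the mixed-parity case $\Theta(1,2k,2l+1)$. There the side $C_{2l+2}$ is bipartite and has no vertex equidistant from $x_0,x_1$, so orthogonal gluing is already exact.

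For $\beta=2k$, $\gamma=2l$, both sides are odd cycles, and their antipodal vertices $y_k$ and $z_l$ are equidistant from $x_0,x_1$. Both Gram matrices are nonsingular, since the QE constant of an odd cycle is negative. Hence the required cross term $\langle p_{y_k},q_{z_l}\rangle=-\frac14$ is realizable if and only if $h_{y_k}h_{z_l}\ge\frac14$. Here $h_v$ is the distance from $\phi(v)$ to the affine span of the other image points of that cycle. This is a quantitative fact you would have to prove. It does hold: the distance spectrum of $C_n$ ($n$ odd) on $\mathbf 1^\perp$ is $-\frac14\sec^2(\pi j/n)$, $j=1,\dots,n-1$, which gives $h^2=\frac{n}{4(n-2)}>\frac14$. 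Nothing in your sketch supplies this step. The paper avoids the issue entirely by applying Winkler's edge kernel to a spanning tree of the whole theta graph, where the kernel matrix turns out to be diagonally dominant.

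Two smaller points. First, the graphs in (b) have $7,9,11$ vertices, not $9,11,13$. Second, in (c) you do not need the test vectors to fail for $\gamma\le7$, because (b) already settles those cases. The paper's vector sums to zero separately on the cluster $\{z_1,x_0,x_1,x_2,y_1,y_2,z_{2k+6}\}$ and on $\{z_{k+1},\dots,z_{k+6}\}$. This kills the term linear in $k$, so $\langle f,Df\rangle=16272$ for every $k\ge1$.
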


The methods used in \cite{MSWtheta} are not suitable to complete this characterization. Hence, we utilize the technique proposed by Winkler \cite{Winklertree}. Let $G$ be a graph. Consider an orientation $G'$ of $G$, i.e., for each edge $(u,v)$ of $G$ exactly one of the directed edges $(a,b)$ or $(b,a)$ is an edge of $G'$.
\begin{theorem}[Winkler \cite{Winklertree}\label{tree}]
Let $G$ be a graph on $n+1$ vertices and let $G'$ be an arbitrary orientation of $G$. Let $T$ be an arbitrary spanning tree of $G$, and let $e_1=(a_1,b_1),e_2=(a_2,b_2),\ldots,e_n=(a_n,b_n)$ denote the edges of $T$, oriented as in $G'$. Then the following conditions are equivalent:
\begin{enumerate}[\rm (i)]
\item $G$ is of QE class,
\item the kernel $K(e_i,e_j)=\frac{1}{2}\big(d_G(a_i,b_j)-d_G(a_i,a_j)-d_G(b_i,b_j)+d_G(b_i,a_j)\big)$ is positive semidefnite.
\end{enumerate}
\end{theorem}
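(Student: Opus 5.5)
We prove Theorem \ref{tree} (Winkler) using Schoenberg's criterion: $G$ is of QE class if and only if $D=[d(x,y)]$ is conditionally negative definite. Equivalently, the form $\sum_{x,y} c_x c_y d(x,y)\le 0$ whenever $\sum_x c_x=0$.

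\emph{Reduction to zero-sum vectors.} The space $\{c\in\mathbb{R}^{V}:\sum_x c_x=0\}$ has dimension $n$. Since $T$ is a spanning tree, the vectors $\delta_{b_i}-\delta_{a_i}$ ($i=1,\dots,n$) form a basis of it. They are linearly independent because the incidence vectors of tree edges are independent; they also lie in the zero-sum space, and there are $n$ of them. So every zero-sum $c$ can be written uniquely as $c=\sum_i t_i(\delta_{b_i}-\delta_{a_i})$ with $t\in\mathbb{R}^n$.

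\emph{Computing the form.} Substituting and expanding bilinearly gives
$$\sum_{x,y}c_xc_y d(x,y)=\sum_{i,j}t_it_j\big(d(b_i,b_j)-d(b_i,a_j)-d(a_i,b_j)+d(a_i,a_j)\big)=-2\sum_{i,j}t_it_jK(e_i,e_j),$$
where the last step uses the symmetry of $d$. Hence the form is $\le 0$ on all zero-sum $c$ if and only if $\sum_{i,j}t_it_jK(e_i,e_j)\ge 0$ for all $t\in\mathbb{R}^n$. This says exactly that the symmetric matrix $[K(e_i,e_j)]$ is positive semidefinite, which proves (i)$\Leftrightarrow$(ii).

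\emph{Choice of orientation.} Reversing an edge $e_i$ replaces $t_i$ by $-t_i$, i.e.\ it conjugates $K$ by a diagonal $\pm1$ matrix, so positive semidefiniteness is unaffected. This explains why the orientation $G'$ is arbitrary. The choice of spanning tree is also irrelevant, since any spanning tree gives a basis of the same zero-sum space.

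\emph{RKHS viewpoint (the paper's approach).} One can phrase the same argument through reproducing kernels. Fix a base vertex $o$. By Schoenberg, $G$ is QE if and only if $k(x,y)=\tfrac12\big(d(x,o)+d(y,o)-d(x,y)\big)$ is a positive semidefinite kernel. In that case it is the reproducing kernel of an RKHS $\mathcal{H}$, and $\phi(x)=k(\cdot,x)$ satisfies $\|\phi(x)-\phi(y)\|^2=d(x,y)$. One then checks that $K(e_i,e_j)=\langle \phi(b_i)-\phi(a_i),\phi(b_j)-\phi(a_j)\rangle$; after the $o$-terms cancel, this is again just $\tfrac12$ of the four-term combination. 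Thus $K$ is a Gram matrix, which gives (i)$\Rightarrow$(ii).

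For the converse, suppose $K$ is positive semidefinite. Take its RKHS (or Gram vectors $v_i$ with $\langle v_i,v_j\rangle=K(e_i,e_j)$) and define $\phi(o)=0$. Then extend $\phi$ along $T$ by $\phi(b_i)=\phi(a_i)+v_i$, so that $\phi(x)$ is the signed sum of the $v_i$ along the tree path from $o$ to $x$. A telescoping computation shows that $\|\phi(x)-\phi(y)\|^2$ equals the four-term sum over the tree-path edges, which collapses to $d(x,y)$. This collapse follows from the same bilinear identity as above, applied to $\delta_y-\delta_x=\sum_{e_i\in\text{path}}\pm(\delta_{b_i}-\delta_{a_i})$.

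The main step to check carefully is the basis claim together with the bookkeeping of signs in the bilinear expansion. Everything else is formal.
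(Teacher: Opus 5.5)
Your proof is correct, but your main argument takes a different route from the paper's. The paper never invokes Schoenberg's criterion inside this proof.
\begin{itemize}
\item For (i)$\Rightarrow$(ii) it starts from an embedding $\phi$. Using $\langle\phi(x),\phi(y)\rangle=\frac12(\|\phi(x)\|^2+\|\phi(y)\|^2-d(x,y))$, it shows that $K$ is the Gram matrix of the vectors $\phi(b_i)-\phi(a_i)$.
\item For (ii)$\Rightarrow$(i) it builds $\phi$ along $T$ from the kernel elements $K_{e_i}$ of the RKHS of $K$. It then asserts, ``by simple calculations'', that the cross terms along the tree path add up to $d_G(x,y)-d_T(x,y)$.
\end{itemize}

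You instead observe that the columns $\delta_{b_i}-\delta_{a_i}$ of the oriented incidence matrix $B$ of $T$ form a basis of the zero-sum hyperplane. Hence $[K(e_i,e_j)]=-\tfrac12 B^{\top}DB$ is just $D$ restricted to that hyperplane and written in a new basis. The equivalence (i)$\Leftrightarrow$(ii) then reduces to Schoenberg's theorem plus a congruence. This is shorter and fully rigorous, and it shows at once why neither the orientation nor the spanning tree matters. It is in the spirit of the ``transformation of Schoenberg's theorem'' that the paper attributes to Winkler.

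The paper's route, which your last paragraph reproduces, buys independence from Schoenberg and an explicit embedding built from Gram vectors of $K$. Combined with your identity, it therefore reproves Schoenberg's criterion for graph metrics. Your bilinear identity, applied to $\delta_y-\delta_x=\sum_{e_i\in P_T(x,y)}\pm(\delta_{b_i}-\delta_{a_i})$, is exactly what justifies the step the paper leaves as a ``simple calculation''. Working with real vectors also avoids the paper's claim that $\langle\phi(x),\phi(y)\rangle$ must be real. In a complex Hilbert space, $d$ determines only its real part.
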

Below, we present a new proof of Winkler's theorem. For this aim, we use the RKHS technique,  one of the favorite topics of Franciszek Szafraniec, cf., e.g, \cite{Alpay,Szafraniec4,Szafraniec3}. Note that the original proof in \cite{Winklertree} consists of  a rather technical combinatorial transformation of Schoenberg's theorem.

This paper is arranged as follows. Section \ref{prel} contains the necessary background. In Section~\ref{sec.winkler} we present the alternative version of the proof of Theorem~\ref{tree}.
In Section~\ref{sec.alpha1}, we study the quadratic embedding of $\Theta(1,\beta,\gamma)$, utilizing Winkler's theorem. We conclude this section by analyzing QE constants of these graphs. Finally, in Section \ref{sec.alpha2}, we explore the quadratic embedding of $\Theta(2,3,\gamma)$ for odd $\gamma\ge3$. To show that theta graphs $\Theta(2,3,3)$, $\Theta(2,3,5)$, and $\Theta(2,3,7)$ are of QE class, Winkler's theorem is applied. Whereas, the non-embeddability of graphs $\Theta(2,3,\gamma)$ for odd $\gamma\ge 9$ is proven by explicit calculation.

\section{Preliminaries}\label{prel}
A \textit{graph} $G=(V,E)$ is a pair of a non-empty set $V$, whose elements are called \emph{vertices}, and a set $E$, which is a set of unordered pairs of two distinct vertices in $V$, whose elements are called \emph{edges}. A \textit{bipartite graph} is a graph whose vertex set can be partitioned into two subsets $V_1$ and $V_2$, such that no two vertices within the same set are adjacent. A \textit{walk} is a finite sequence of edges that joins a sequence of vertices. A \emph{trail} is a walk in which all edges are distinct. If the vertices of a trail are distinct, then it is called a \emph{path}. If only the first and last vertices of the trail are the same, then it is called a \emph{cycle}. A \emph{tree} is a graph without a cycle. A \emph{spanning tree} of a graph $G$ is a subgraph that is a tree which includes all of the vertices of $G$. An \emph{orientation} of a graph $G$ is an assignment of exactly one direction to each of the edges of $G$, i.e., for each edge $(u,v)$ we have exactly one of the edges $(a,b)$ or $(b,a)$. A graph with an orientation is called an \emph{oriented graph}.

A graph is called \textit{connected} if there exists a finite path for each pair of vertices. 
In such a case, by $d(x,y)=d_G(x,y)$ we denote the length of a shortest path connecting $x$ and $y$, such a path is called \textit{geodesic}. Note that  $d(x,y)$ is a metric on $V$, called the \textit{graph distance}. The \textit{distance matrix} is defined by
$$
D=d(x,y)_{x,y \in V}.
$$ 

An $n\times n$ real symmetric matrix $M$ is called \emph{positive semidefinite} if $\langle f,Mf\rangle \ge 0$ for all $f\in \mathbb{R}^n$. Similarly, an $n\times n$ real symmetric distance matrix $D$ is called \emph{conditionally negative definite} if $\langle f,Df\rangle \le 0$ for all $f\in \mathbb{R}^n$ with $\langle\mathbf{1},f\rangle=0$, where $\mathbf{1}$ denotes the all-ones vector in~$\mathbb{R}^n$.

For the definitions of quadratic embedding, isometrically embedded subgraphs, and primary non-QE graphs, we refer to the Introduction. In recent years, Obata \cite{Obata3} developed the theory of quadratic embedding by introducing the  \emph{QE constant} of a graph $G$, which is defined~as 
	$$
	\QEC(G)= \max \{  \langle f,Df\rangle :  \norm{f} =1, \langle\mathbf{1},f\rangle=0 \},
	$$
where $\textbf{1}$ is a vector of ones. A graph $G$ is of QE-class if and only if   
$\QEC(G) \le 0$. Explicit values of QEC for some classes of graphs can be found in
\cite{Bas, ObataBaskoro2024, ChouNan, IraSug, Jak1, Jak2, Lou, Mlo1, Mlo2, MłotkowskiObata2024, MSWtheta, Obata1, ObataPrim6v, Obata3, Pur}. Clearly, if $H$ is isometrically embedded in $G$, then $\QEC(H)\le \QEC(G)$. 

The definition of theta graphs is given in the Introduction.

\section{Winkler's Theorem}\label{sec.winkler}
Winkler proposed in \cite{Winklertree} a method to study the quadratic embedding of graphs. Let $G$ be a graph. Consider an orientation $G'$ of $G$, i.e., for each edge $(u,v)$ of $G$, exactly one of the edges $(a,b)$ or $(b,a)$ is an edge of $G'$. 
We refer, e.g., to \cite{Szafraniec2,Szafraniec1} for the basics on RKHS.

\begin{proof}[Proof of Theorem \ref{tree}]
   (i)$\Rightarrow$(ii) Consider the graph $G$ on $n+1$ vertices. If $G$ is of QE class, then
$$
d_G(x,y)=\norm{ \phi(x)-\phi(y) }^2=\norm{\phi(x)}^2+\norm{\phi(y)}^2-2\RE\langle  \phi(x),\phi(y)\rangle,
$$
for each pair of vertices.
Since $d_G(x,y)$ is real, $\langle  \phi(x),\phi(y)\rangle$ is real as well, and thus 
\begin{equation}\label{inner}
\langle  \phi(x),\phi(y)\rangle =\frac{1}{2}\Big(\norm{\phi(x)}^2+\norm{\phi(y)}^2 -d_G(x,y)\Big).
\end{equation}

Let $G'$ be an arbitrary orientation of $G$. Let $T$ be an arbitrary spanning tree of $G$, and let $e_1=(a_1,b_1),e_2=(a_2,b_2),\ldots,e_n=(a_n,b_n)$ denote the edges of $T$, oriented as in $G'$. For the edges from $T$, define 
$$K_{e_i}:=\phi(b_i)-\phi(a_i). $$

Clearly,
\begin{equation}\label{inner3}
    \langle K_{e_i},K_{e_j}\rangle=\langle \phi(b_i),\phi(b_j)\rangle-\langle \phi(b_i),\phi(a_j)\rangle-\langle \phi(a_i),\phi(b_j)\rangle+\langle \phi(a_i),\phi(a_j)\rangle.
\end{equation}
Applying (\ref{inner}) to (\ref{inner3}), we obtain
\begin{equation}\label{inner4}\langle K_{e_i},K_{e_j}\rangle :=\frac{1}{2}\big(d_G(a_i,b_j)-d_G(a_i,a_j)-d_G(b_i,b_j)+d_G(b_i,a_j)\big). \end{equation}
Consequently, there exists a positive semidefinite kernel $K:X\times X\to \mathbb{C}$, where $X$ is a set of edges of $T$, such that $K(e_i,e_j)=\frac{1}{2}\big(d_G(a_i,b_j)-d_G(a_i,a_j)-d_G(b_i,b_j)+d_G(b_i,a_j)\big)$.

(i)$\Leftarrow$(ii) 
Let $\mathcal H$ be the RHKS corresponding to the kernel and let $K_e\in\mathcal H$ ($e\in T$) be the kernel elements.
Take the spanning tree and define the mapping $\phi:V\to \mathcal H$ recursively. Fix one vertex $v_0$ as a root of $T$, set $\phi(v_0)=0$. 
Next, we extend the definition along the spanning tree.
Assume that $v$ is a vertex for which the value of $\phi$ is not defined yet, and let $e=(v,w)$ be an edge in $G$ with $\phi(w)$ already defined.
Define $\phi(v)=\phi(w)\pm K_e$ depending on the orientation, so that 
\eqref{inner4} holds. 

Let $P_T(x,y)$ be the path in $T$ which connects vertices $x$ and $y$, and let $d_T(x,y)$ be the number of edges in $P_T(x,y)$. For the edges $e_i=(a_i,b_i)\in P_T(x,y)$, define the function
$$\zeta(e_i)=\begin{cases}
1 & \text{if}\ d_T(x,a_i)<d_T(x,b_i)\\
-1 & \text{if}\ d_T(x,a_i)>d_T(x,b_i)
\end{cases}.$$
Observe that $$\phi(y)-\phi(x)=\sum_{e_i\in P_T(x,y)} \zeta(e_i)K_{e_i}.$$
Then

\begin{equation*}
\begin{split}
\|\phi(y)-\phi(x)\|^2
&=\|\sum_{e_i\in P_T(x,y)} \zeta(e_i)K_{e_i} \|^2 \\ &=\sum_{e_i\in P_T(x,y)}\|K_{e_i}\|^2
 +2 \sum_{\substack{
e_i,e_j\in P_T(x,y)\\
e_i\ne e_j
}}
\zeta(e_i)\zeta(e_j)\langle K_{e_i},K_{e_j}\rangle.
\end{split}
\end{equation*}
Note that $\|K_{e_i}\|^2=1$ for each $e_i\in P_T(x,y)$. By simple calculations, we obtain
$$2 \sum_{\substack{
e_i,e_j\in P_T(x,y)\\
e_i\ne e_j
}}
\zeta(e_i)\zeta(e_j)\langle K_{e_i},K_{e_j}\rangle =d_G(x,y)-d_T(x,y).$$
Hence,
$$\|\phi(y)-\phi(x)\|^2=d_G(x,y).$$
This concludes the proof.
\end{proof}

\section{Quadratic Embedding of $\Theta(1,\beta,\gamma)$} \label{sec.alpha1}

The following observation will be useful to study the quadratic embedding of $\Theta(1,\beta,\gamma)$. Let $G$ be a graph and $G'$ be an arbitrary orientation of $G$. For simplicity, we write $d(x,y)$ instead of $d_G(x,y)$. Consider two edges  $e=(a,b)$ and $e'=(a',b')$  of $G'$, and an inner product
$$\langle e,e'\rangle :=\frac{1}{2}\big(d(a,b')-d(a,a')-d(b,b')+d(b,a')\big). $$
Clearly, $\langle e,e'\rangle$ is symmetric.
The possible values of $\langle e,e'\rangle$ are $0, \pm \frac{1}{2}$ and $\pm1$, see \cite{Winklertree}:
\begin{enumerate}[\rm (i)]
    \item $\langle e,e'\rangle=0$ if
    \begin{align}
        d(a,a')<d(b,a')\quad &\text{and}\quad d(a,b')<d(b,b')\ \text{or} \label{1}\\
         d(b,a')<d(a,a')\quad &\text{and}\quad d(b,b')<d(a,b'),\ \text{or} \label{2}\\
         d(a,a')=d(b,a')\quad &\text{and}\quad d(b,a')=d(b,b'); \label{3}
    \end{align}
    \item  $\langle e,e'\rangle=-\frac{1}{2}$ if
    \begin{align}
         d(a,a')=d(b,a')\quad& \text{and}\quad d(a,b')<d(b,b')\ \text{or} \label{4}\\
         d(a,b')=d(b,b')\quad& \text{and}\quad d(b,a')<d(a,a'); \label{5}
    \end{align}
    \item  $\langle e,e'\rangle=\frac{1}{2}$ if
    \begin{align}
        d(a,b')=d(b,b')\quad& \text{and}\quad d(a,a')<d(b,a')\ \text{or} \label{6}\\
        d(a,a')=d(b,a')\quad& \text{and}\quad d(b,b')<d(a,b'); \label{7}
    \end{align}
    \item  $\langle e,e'\rangle=-1$ if
    \begin{equation}\label{8}
        d(b,a')<d(a,a')\quad \text{and}\quad d(a,b')<d(b,b');
    \end{equation}    
    \item  $\langle e,e'\rangle=1$ if
    \begin{equation} \label{9}
        d(a,a')<d(b,a')\quad \text{and}\quad d(b,b')<d(a,b').
    \end{equation}
\end{enumerate}

\begin{theorem}\label{alpha1}
    Assume that $2\le\beta\le\gamma$.
Then $\Theta(1,\beta,\gamma)$ is of QE class.
\end{theorem}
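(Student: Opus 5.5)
The plan is to apply Theorem~\ref{tree} with a spanning tree chosen so that the kernel $K$ has a transparent structure. Then I will show that $K$ is positive semidefinite by exhibiting it as a Gram matrix of explicit vectors, or, failing that, as a sum of manifestly positive semidefinite pieces.

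Recall that $\Theta(1,\beta,\gamma)$ is obtained by gluing the cycles $C_{\beta+1}=[x_0,y_1,\dots,y_{\beta-1},x_1,x_0]$ and $C_{\gamma+1}=[x_0,z_1,\dots,z_{\gamma-1},x_1,x_0]$ along the common edge $\{x_0,x_1\}$. Both cycles are isometrically embedded, since any detour through the other cycle is at least as long as the chord. The graph has $\beta+\gamma$ vertices.
\begin{enumerate}[\rm (1)]
\item \emph{Choice of tree and orientation.} Take $T$ to consist of the edge $e_0=(x_0,x_1)$, the $y$-path with one edge deleted, and the $z$-path with one edge deleted. I would delete the edge opposite to $e_0$ in each cycle, so that $T$ is symmetric about the chord. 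Orient every $y$- and $z$-edge in the direction from $x_0$ towards $x_1$ along its path.
\item \emph{Entries within one cycle.} Using the case list (\ref{1})--(\ref{9}), I will tabulate all values $\langle e,e'\rangle$. For two edges in the same cycle, distances are cycle distances, so the values are those of $C_m$. With the orientation chosen, the value is $1$ on the diagonal and $0$ for most pairs. Antipodal edges give $-1$ (with sign adjusted to orientation) when $m$ is even. When $m$ is odd, the two nearly antipodal edges each give $\mp\tfrac12$.
\item \emph{Entries between the two cycles.} For $e$ on the $y$-path and $e'$ on the $z$-path, the distance $d(\cdot,\cdot)$ routes through $x_0$ or $x_1$. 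I expect $\langle e,e'\rangle=0$ except when both edges lie near the "far" positions, where the geodesics switch between going via $x_0$ and going via $x_1$. That switch should produce only a bounded number of entries equal to $\pm\tfrac12$.
\end{enumerate}

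To prove positivity I would construct vectors directly, using the standard embeddings of cycles. For an even cycle $C_{2k}$, take coordinates $\epsilon_1,\dots,\epsilon_k$ and assign to the $i$-th oriented edge the vector $\pm\epsilon_{i \bmod k}$. For an odd cycle, the $\pm\tfrac12$ entries are realised by half-sum vectors such as $\tfrac{1}{\sqrt2}(\epsilon_i\pm\epsilon_j)$. The chord edge $e_0$ belongs to both cycles, so its vector must be shared. The idea is therefore to embed each cycle's edge set in its own orthonormal system with one common coordinate for $e_0$. Then I check that the cross inner products computed in step (3) are reproduced, adding a few extra coordinates at the switching positions if needed.

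This naturally splits into cases by the parities of $\beta$ and $\gamma$: both odd (already known from \cite{MSWtheta}, which gives a consistency check), one even, and both even. If the explicit vectors turn out awkward, the fallback is to write $K=I+N$, where $N$ is sparse with at most a few nonzero entries per row. I would then verify positivity by Gershgorin or a small Schur-complement computation on the nonzero block.

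The main obstacle is step (3) together with the gluing. Within one cycle everything is the classical computation, but the cross terms near the switching vertices, and the fact that the vector for $e_0$ must serve both cycles simultaneously, are where a naive block-diagonal construction fails. I would first work out $\Theta(1,4,4)$ and $\Theta(1,4,5)$ by hand to see the exact pattern of the cross entries, and then generalise.
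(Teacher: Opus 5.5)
Your framework is the paper's: Theorem~\ref{tree}, entries read off from the list (\ref{1})--(\ref{9}), then a positivity check. The difference is the spanning tree. You keep the chord $e_0=(x_0,x_1)$ and cut each long path at its far end. The paper deletes the chord and one $z$-edge ($c_{2l}$, resp.\ $c_{l+1}$), keeping the whole $y$-path.

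Your choice removes the obstacle you anticipate in step (3). If both endpoints of $b_i$ are strictly closer to $x_0$ (or both strictly closer to $x_1$), then $d(y_i,\cdot)-d(y_{i-1},\cdot)$ is constant on the $z$-path, so $\langle b_i,c_j\rangle=0$ for all $j$. Only the far edges can have nonzero $z$-entries: $b_k,b_{k+1}$ for $\beta=2k$, and $b_{k+1}$ for $\beta=2k+1$. For $\beta=2k$, $\gamma=2l$ the nonzero entries are exactly $\langle b_k,c_{l+1}\rangle=\langle b_{k+1},c_l\rangle=\tfrac12$. The paper's tree must carry these two entries; yours can delete them.

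The kernel is then two cycle kernels coupled only through $e_0$. The only nonzero off-diagonal entries of $e_0$ are $\tfrac12$, one with the surviving far edge of each odd cycle. With a coordinated deletion (see below), every row has unit diagonal and off-diagonal absolute sum at most $1$. So the paper's Gershgorin argument finishes all parity cases at once; in the both-odd case you get $[1]\oplus\left[\begin{smallmatrix} I&-I\\ -I&I\end{smallmatrix}\right]\oplus\left[\begin{smallmatrix} I&-I\\ -I&I\end{smallmatrix}\right]$. The paper's tree needs no coordination but must compute and carry the two cross entries.

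Two points in your plan need fixing.

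\textbf{Which far edges to delete.} ``The edge opposite to $e_0$'' exists only in an even cycle. In $C_{2k+1}$ there are two far edges, $b_k$ and $b_{k+1}$. When $\beta$ and $\gamma$ are both even, the choice must be coordinated: delete $\{b_{k+1},c_{l+1}\}$ or $\{b_k,c_l\}$. A mixed choice such as $\{b_{k+1},c_l\}$ leaves $\langle b_k,c_{l+1}\rangle=\tfrac12$ in the matrix. The row of $b_k$ then has entries $-\tfrac12$ at $b_{2k}$, $\tfrac12$ at $e_0$ and $\tfrac12$ at $c_{l+1}$, an off-diagonal sum of $\tfrac32$, so Gershgorin no longer applies. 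The matrix is still positive semidefinite by Theorem~\ref{tree}, but you would need a different certificate.

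\textbf{The explicit-vector gluing.} Your preferred construction fails as described. Suppose $e_0$ is a common unit vector and the rest of the two cycles live in mutually orthogonal systems. Then $b_k$ and $c_l$, each with component $\tfrac12$ along $e_0$, get inner product $\tfrac14$ instead of $0$. Their components orthogonal to $e_0$ must have inner product $-\tfrac14$, so the two systems cannot be kept separate, and adding fresh coordinates does not fix this. What works is to split $e_0=\tfrac{1}{\sqrt2}(\epsilon_p+\epsilon_q)$, using $\epsilon_p$ only on the $y$-side and $\epsilon_q$ only on the $z$-side. That is precisely the rank-one decomposition behind diagonal dominance, so the Gershgorin argument is the actual proof either way.
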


\begin{proof}
    Consider the graph $\Theta(1,\beta,\gamma)$, with $2\le\beta\le \gamma$. The cases $\beta=2$, $\beta=3$ and $\beta,\ \gamma$ are odd were shown in \cite{MSWtheta}. Hence, we may assume that $\beta\ge 4$.

Let $P_2:=[x_0,x_1]$, $P_{\beta+1}:=[y_0,y_1,\ldots,y_{\beta}]$ and $P_{\gamma+1}:=[z_0,z_1,\ldots,z_{\gamma}]$ be the paths of $\Theta(1,\beta,\gamma)$. 

(i) Let $\beta=2k$ and $\gamma=2l$, where $2\le k \le l $. We endow $\Theta(1,2k,2l)$ with the following orientation (see Figure \ref{orientedtheta1}):
\begin{itemize}
    \item $a_1:=(x_0,x_1)$, where $x_0=y_0=z_0$ and $x_1=y_{2k}=z_{2l}$;
    \item $b_i:=(y_{i-1},y_i)$ for $i=1,\ldots,2k$;
    \item $c_j:=(z_{j-1},z_j)$ for $j=1,\ldots,2l$.
\end{itemize}
To obtain a spanning tree, we remove the edges $a_1$ and $c_{2l}$, see Figure \ref{orientedtree1}.

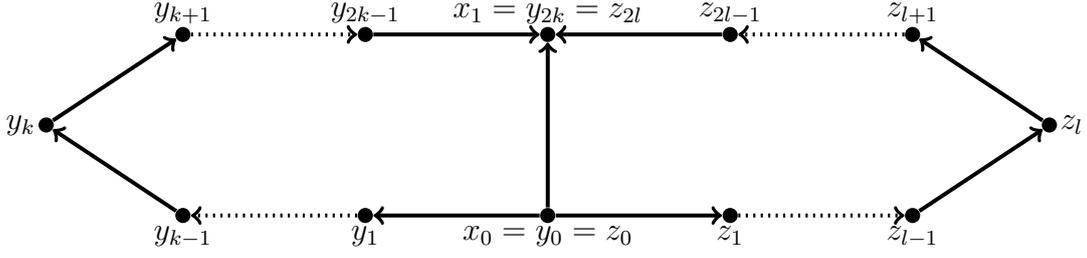
\begin{figure}[h]
\centering
\begin{tikzpicture}[x=12cm,y=12cm]
\tikzset{     
    e4c node/.style={circle,fill=black,minimum size=0.2cm,inner sep=0}, 
    e4c edge/.style={line width=1.5pt}
  }
  \node[e4c node] (a1) at (0.50, 1.00) {};
   \node[anchor= south] at (0.50, 1.00) {$x_1=y_{2k}=z_{2l}$};
  \node[e4c node] (b1) at (0.30, 0.80) {}; 
   \node[anchor= north] at (0.30, 0.80) {$y_1$};
  \node[e4c node] (bk) at (0.10, 1.00) {};
   \node[anchor= south] at (0.10, 1.00) {$y_{k+1}$};
  \node[e4c node] (c1) at (0.70, 0.80) {}; 
   \node[anchor= north] at (0.70, 0.80) {$z_1$};
  \node[e4c node] (ck) at (0.90, 1.00) {};
   \node[anchor= south] at (0.90, 1.00) {$z_{l+1}$};
  \node[e4c node] (c) at (1.05, 0.90) {};
   \node[anchor= west] at (1.05, 0.90) {$z_l$};
  \node[e4c node] (a0) at (0.50, 0.80) {}; 
   \node[anchor= north] at (0.50, 0.80) {$x_0=y_0=z_0$};
  \node[e4c node] (bk-1) at (0.10, 0.80) {};
   \node[anchor= north] at (0.10, 0.80) {$y_{k-1}$};
  \node[e4c node] (b2k-1) at (0.30, 1.00) {}; 
   \node[anchor= south] at (0.30, 1.00) {$y_{2k-1}$};
  \node[e4c node] (ck-1) at (0.90, 0.80) {};
   \node[anchor= north] at (0.90, 0.80) {$z_{l-1}$};
  \node[e4c node] (c2k-1) at (0.70, 1.00) {}; 
   \node[anchor= south] at (0.70, 1.00) {$z_{2l-1}$};
  \node[e4c node] (b) at (-0.05, 0.90) {}; 
   \node[anchor= east] at (-.05, 0.90) {$y_k$};

  \path[->,draw,thick]
  (b) edge[e4c edge]  (bk)
  (bk-1) edge[e4c edge]  (b)
  %(b1) edge[e4c edge]  (bk-1)
  (a0) edge[e4c edge]  (b1)
  (a0) edge[e4c edge]  (a1)
  %(bk) edge[e4c edge]  (b2k-1)
  (b2k-1) edge[e4c edge]  (a1)
  (a0) edge[e4c edge]  (c1)
  %(c1) edge[e4c edge]  (ck-1)
  (ck-1) edge[e4c edge]  (c)
  (c) edge[e4c edge]  (ck)
  %(ck) edge[e4c edge]  (c2k-1)
  (c2k-1) edge[e4c edge]  (a1)
  ;
  
   \path[->,draw,dotted,thick]
(b1) edge[e4c edge] (bk-1)
(bk) edge[e4c edge]  (b2k-1)
(c1) edge[e4c edge]  (ck-1)
(ck) edge[e4c edge]  (c2k-1)
;
\end{tikzpicture}
\caption{Oriented graph $\Theta(1,2k,2l)'$} 
\label{orientedtheta1}
\end{figure}

\begin{figure}[h]
\centering
\begin{tikzpicture}[x=12cm,y=12cm]
\tikzset{     
    e4c node/.style={circle,fill=black,minimum size=0.2cm,inner sep=0}, 
    e4c edge/.style={line width=1.5pt}
  }
  \node[e4c node] (a1) at (0.50, 1.00) {};
   \node[anchor= south] at (0.50, 1.00) {$x_1=y_{2k}=z_{2l}$};
  \node[e4c node] (b1) at (0.30, 0.80) {}; 
   \node[anchor= north] at (0.30, 0.80) {$y_1$};
  \node[e4c node] (bk) at (0.10, 1.00) {};
   \node[anchor= south] at (0.10, 1.00) {$y_{k+1}$};
  \node[e4c node] (c1) at (0.70, 0.80) {}; 
   \node[anchor= north] at (0.70, 0.80) {$z_1$};
  \node[e4c node] (ck) at (0.90, 1.00) {};
   \node[anchor= south] at (0.90, 1.00) {$z_{l+1}$};
  \node[e4c node] (c) at (1.05, 0.90) {};
   \node[anchor= west] at (1.05, 0.90) {$z_l$};
  \node[e4c node] (a0) at (0.50, 0.80) {}; 
   \node[anchor= north] at (0.50, 0.80) {$x_0=y_0=z_0$};
  \node[e4c node] (bk-1) at (0.10, 0.80) {};
   \node[anchor= north] at (0.10, 0.80) {$y_{k-1}$};
  \node[e4c node] (b2k-1) at (0.30, 1.00) {}; 
   \node[anchor= south] at (0.30, 1.00) {$y_{2k-1}$};
  \node[e4c node] (ck-1) at (0.90, 0.80) {};
   \node[anchor= north] at (0.90, 0.80) {$z_{l-1}$};
  \node[e4c node] (c2k-1) at (0.70, 1.00) {}; 
   \node[anchor= south] at (0.70, 1.00) {$z_{2l-1}$};
  \node[e4c node] (b) at (-0.05, 0.90) {}; 
   \node[anchor= east] at (-.05, 0.90) {$y_k$};

  \path[->,draw,thick]
  (b) edge[e4c edge]  (bk)
  (bk-1) edge[e4c edge]  (b)
  %(b1) edge[e4c edge]  (bk-1)
  (a0) edge[e4c edge]  (b1)
  %(a1) edge[e4c edge]  (a0)
  %(bk) edge[e4c edge]  (b2k-1)
  (b2k-1) edge[e4c edge]  (a1)
  (a0) edge[e4c edge]  (c1)
  %(c1) edge[e4c edge]  (ck-1)
  (ck-1) edge[e4c edge]  (c)
  (c) edge[e4c edge]  (ck)
  %(ck) edge[e4c edge]  (c2k-1)
  %(c2k-1) edge[e4c edge]  (a1)
  ;
  
   \path[->,draw,dotted,thick]
(b1) edge[e4c edge] (bk-1)
(bk) edge[e4c edge]  (b2k-1)
(c1) edge[e4c edge]  (ck-1)
(ck) edge[e4c edge]  (c2k-1)
;
\end{tikzpicture}
\caption{Spanning tree of $\Theta(1,2k,2l)'$} 
\label{orientedtree1}
\end{figure}
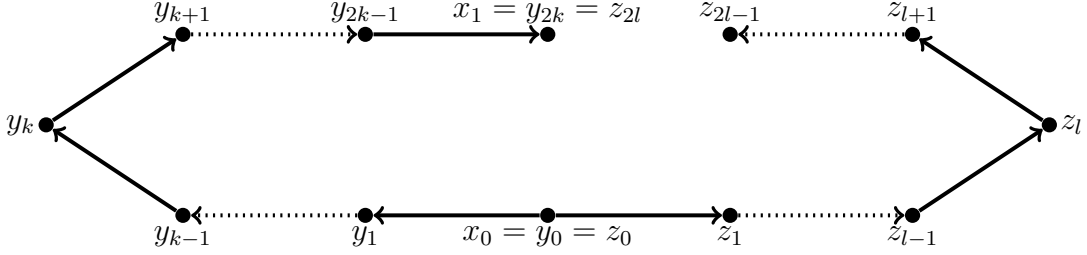

First, we calculate the value of $\langle b_p,b_r\rangle$ for $p,r\in \{1,\ldots,2k\}$. Clearly,
$$\langle b_p,b_p\rangle=1,\quad \quad p=1,\ldots,2k. $$
Note that the distance between a pair of vertices from the $P_{\beta+1}$ is at most $k$, because the path $P_{\beta+1}$ together with the path $P_2$ creates the cycle with $2k+1$ vertices. For $i=1,\ldots,k$, consider the edges $b_i=(y_{i-1},y_i)$, $b_{i+k}=(y_{i+k-1},y_{i+k})$ and, if $i<k$, $b_{i+k+1}=(y_{i+k},y_{i+k+1})$. Then $d(y_{i-1},y_{i+k-1})=k>d(y_i,y_{i+k-1})=k-1$ and $d(y_{i-1},y_{i+k})=i-1+1+2k-(i-k)=k=d(y_i,y_{i+k})$, hence by (\ref{5}) we obtain $$\langle b_i,b_{i+k}\rangle=-\frac{1}{2},\quad \quad i=1,\ldots,k.$$ 

Similarly we calculate $\langle b_i,b_{i+k+1}\rangle$. We have $d(y_{i-1},y_{i+k+1})=i-1+1+2k-(i-k+1)=k-1$ and $d(y_i,y_{i+k+1})=i+1+2k-(i-k+1)=k$, hence by (\ref{4}) we obtain
$$\langle b_i,b_{i+k+1}\rangle=-\frac{1}{2},\quad\quad i=1,\ldots,k-1.$$

Let $0<j<k$ and consider the edge $b_{i+j}=(y_{i+j-1},y_{i+j})$. Then $d(y_{i-1},y_{i+j-1})=j> d(y_{i},y_{i+j-1})=j-1$ and 
$d(y_{i-1},y_{i+j})=j+1> d(y_{i},y_{i+j})=j$, hence by (\ref{2}) we obtain 
$$\langle b_i, b_{i+j}\rangle=0,\quad \quad i=1,\ldots,k,\ j=1,\ldots,k-1. $$
In similar manner can be proven that $\langle b_i, b_{i+j}\rangle=0 $ for $j>k$ and $i+j\le 2k$. 

Since the inner product $\langle e,e'\rangle$ is symmetric, for $p,r\in \{1,\ldots,2k\}$ we obtain
$$\langle b_p, b_{r}\rangle=\begin{cases}
1 & \text{if}\ p=r\\
-\frac{1}{2} & \text{if}\ |p-r|=k\ \text{or}\ |p-r|=k+1\\
0 & \text{otherwise}
\end{cases}. $$

Inasmuch as the path $P_{\gamma+1}$ has the same parity as $P_{\beta+1}$, for $\langle c_s, c_t\rangle$, where $s,t\in \{1,\ldots,2l\}$, we obtain
$$\langle c_s, c_t\rangle=\begin{cases}
1 & \text{if}\ s=t\\
-\frac{1}{2} & \text{if}\ |c-t|=l\ \text{or}\ |c-t|=l+1\\
0 & \text{otherwise}
\end{cases}. $$

Now, consider the inner product $\langle b_p,c_s\rangle$ for $p=1,\ldots,2k$ and $s=1,\ldots,2l-1$, because the edge $c_{2l}$ is removed. Let $b_i=(y_{i-1},y_i)$ for $i\in\{1,\ldots,k-1\}\cup\{k+2,\ldots,2k\}$ and $z_j$ for $j=0,\ldots,2l-1$. Then $d(y_{i-1},z_j)<d(y_i,z_j)$ for $i=1,\ldots,k-1$ and $d(y_{i-1},z_j)>d(y_i,z_j)$ for $i=k+2,\ldots,2k$. Therefore by (\ref{1}) and (\ref{2}), respectively, we obtain
$$\langle b_i,c_j\rangle=0,\quad \quad i\in\{1,\ldots,k-1\}\cup\{k+2,\ldots,2k\},\ j=0,\ldots,2l.$$

The remaining edges to consider are $b_k$ and $b_{k+1}$. We have $d(y_{k-1},z_j)=k-1+j<d(y_k,z_j)=k+j$ for $j=0,\ldots,l$. By (\ref{1}) we obtain
$$\langle b_k,c_j \rangle=0,\quad \quad j=1,\ldots,l.$$
Whereas for $j\in \{l+1,\ldots,2l\}$ we have $d(y_{k-1},z_j)=d(y_k,z_j)=k+2l-j$. By (\ref{3}) we obtain 
$$\langle b_k,c_j \rangle=0, \quad\quad j=l+2,\ldots,2l-1.$$
Note that $d(y_{k-1},z_l)=k-1+l<d(y_k,z_l)=k+l$ and $d(y_{k-1},z_{l+1})=d(y_k,z_{l+1})=k+l-1$, hence (\ref{6}) implies that
$$\langle b_k,c_{l+1}\rangle=\frac{1}{2}.$$
Symmetric argument leads to $\langle b_{k+1},c_l\rangle=\frac{1}{2}.$ Hence the inner product $\langle b_p,c_s\rangle$ for $p=1,\ldots,2k$ and $s=1,\ldots,2l-1$ has the form
$$\langle b_p, c_s\rangle=\begin{cases}
\frac{1}{2} & \text{if}\ i=k,j=l+1\ \text{or}\ i=k+1,j=l\\
0 & \text{otherwise}
\end{cases}. $$

Now, we summarize all non-zero values of forms. Recall that, $$\langle b_p,b_p\rangle=\langle c_s,c_s\rangle=1, \quad\quad p=1,\ldots,2k,\ s=1,\ldots,2l-1.$$
In other cases, we have 
\begin{align*}
  &\langle b_p,b_{p+k}\rangle=\langle b_p,b_{p+k+1}\rangle=-\frac{1}{2}, \quad\quad p=1,\ldots,k-1;\\
  &\langle b_k,b_{2k}\rangle=-\frac{1}{2},\ \langle b_k, c_{l+1}\rangle=\frac{1}{2};\\
  &\langle b_{k+1},b_1\rangle=-\frac{1}{2},\ \langle b_{k+1}, c_l\rangle=\frac{1}{2};\\
  &\langle b_p,b_{p-k}\rangle=\langle b_p,b_{p-k-1}\rangle=-\frac{1}{2},\quad\quad p=k+2,\ldots,2k;\\
  & \langle c_s,c_{s+l}\rangle=\langle c_s,c_{s+l+1}\rangle=-\frac{1}{2},\quad\quad \ s=1,\ldots,l-2;\\
  & \langle c_{l-1}, c_{2l-1}\rangle=-\frac{1}{2};\\
  & \langle c_l,b_{k+1}\rangle=\frac{1}{2};\\
  & \langle c_{l+1},b_k\rangle=\frac{1}{2},\ \langle c_{l+1}, c_1\rangle=-\frac{1}{2};
\end{align*}
and finally
$$\langle c_s,c_{s-l}\rangle=\langle c_s,c_{s-l-1}\rangle=-\frac{1}{2},\quad\quad s=l+2,\ldots,2l-1.$$
Recall that the RKHS is on the set of edges $E(\Theta(1,2k,2l)')$, while $c_{2l}\notin E(\Theta(1,2k,2l)')$ by construction.

Now, we define:
    \begin{itemize}
        \item $e_i:=b_i$ for $i=1,\ldots,2k$;
        \item $e_{2k+j}:=c_j$ for $j=1,\ldots,2l-1$ 
    \end{itemize}
    and define a matrix $M:=[\langle e_r,e_t\rangle]_{r,t=1,\ldots,2k+2l-1}$. Then

\[
M=\begin{bmatrix}
    I_k & A & 0 & C\\
    A^\top & I_k & D & 0\\
    0 & D^\top & I_l & B\\
    C^\top & 0 & B^\top & I_{l-1} 
\end{bmatrix},
\]  
where
\[
A=
\begin{bmatrix}
-\frac12 & -\frac12 &  &  & \\
 & -\frac12 & \ddots &  & \\
 &  & \ddots & \ddots & \\
 &  &  & -\frac12 & -\frac12\\
 & &  &  & -\frac12
\end{bmatrix}_{k\times k},\quad
B=\begin{bmatrix}
  -\frac12 & -\frac12 &  &  \\
 & -\frac12 & \ddots &  \\
 &  & \ddots & \ddots & \\
 &  &  & -\frac12 & -\frac12\\
 & &  &  & -\frac12\\
 & & & &
\end{bmatrix}_{l\times l-1},
\]
\[
C=\begin{bmatrix}
    & & & &\\
    & & & &\\
    & & & &\\
    \frac12 & & & &
\end{bmatrix}_{k\times l-1},\quad \text{and}\quad
D=\begin{bmatrix}
    & & & & &\frac12 \\
    & & & & &\\
    & & & & &\\
    & & & & &
\end{bmatrix}_{k\times l}.
\]
Blank entries represent zeros. Alternatively, matrices $A,B,C$ and $D$ can be defined by the following entry-wise formulas:
\begin{align*}
A=[a_{ij}]_{1\le i,j\le k},\quad&
a_{ij}=
\begin{cases}
-\tfrac12 & \text{if } j=i \text{ or } j=i+1,\\
0 & \text{otherwise}
\end{cases}\\
B=[b_{ij}]_{1\le i \le l,\ 1\le j\le l-1},\quad&
b_{ij}=
\begin{cases}
-\tfrac12 & \text{if } j=i \text{ or } j=i+1,\\
0 & \text{otherwise}
\end{cases}\\
C=[c_{ij}]_{1\le i \le k,\ 1\le j\le l-1},\quad&
c_{ij}=
\begin{cases}
\tfrac12 &\ \ \text{if }\ i=k\ \text{and}\ j=1,\\
0 & \ \ \text{otherwise}.
\end{cases}\\
D=[d_{ij}]_{1\le i \le k,\ 1\le j\le l},\quad&
d_{ij}=
\begin{cases}
\tfrac12 &\ \ \text{if }\ i=1\ \text{and}\ j=l,\\
0 & \ \ \text{otherwise}.
\end{cases}
\end{align*}

    Note that each diagonal element of $M$ equals 1 and the sum of the absolute values of the non-diagonal entries in each row is at most 1. Hence, all eigenvalues of $M$ belong to the interval $[0,2]$, see \cite{HorJ85}. Thus $M$ is positive semidefinite, and by Theorem \ref{tree} the theta graph $\Theta(1,\beta,\gamma)$ is of QE class for even $\beta$ and $\gamma$.

    (ii) Let $\beta=2k$ and $\gamma=2l+1$, where $2\le k,l $. We endow $\Theta(1,2k,2l+1)$ with the following orientation (see Figure \ref{orientedtheta2}):
\begin{itemize}
    \item $a_1:=(x_0,x_1)$, where $x_0=y_0=z_0$ and $x_1=y_{2p}=z_{2r+1}$;
    \item $b_i:=(y_{i-1},y_i)$ for $i=1,\ldots,2k$;
    \item $c_j:=(z_{j-1},z_j)$ for $j=1,\ldots,2l+1$.
\end{itemize}
To obtain a spanning tree, we remove the edges $a_1$ and $c_{l+1}$, see Figure \ref{orientedtree2}.

\begin{figure}[h]
\centering
\begin{tikzpicture}[x=12cm,y=12cm]
\tikzset{     
    e4c node/.style={circle,fill=black,minimum size=0.2cm,inner sep=0}, 
    e4c edge/.style={line width=1.5pt}
  }
  \node[e4c node] (a1) at (0.50, 1.00) {};
   \node[anchor= south] at (0.50, 1.00) {$x_1=y_{2k}=z_{2l+1}$};
  \node[e4c node] (b1) at (0.30, 0.80) {}; 
   \node[anchor= north] at (0.30, 0.80) {$y_1$};
  \node[e4c node] (bk) at (0.10, 1.00) {};
   \node[anchor= south] at (0.10, 1.00) {$y_{k+1}$};
  \node[e4c node] (c1) at (0.70, 0.80) {}; 
   \node[anchor= north] at (0.70, 0.80) {$z_1$};
  \node[e4c node] (ck) at (0.90, 1.00) {};
   \node[anchor= south] at (0.90, 1.00) {$z_{l+2}$};
  \node[e4c node] (c) at (1.10, 0.80) {};
   \node[anchor= north] at (1.10, 0.80) {$z_l$};
  \node[e4c node] (a0) at (0.50, 0.80) {}; 
   \node[anchor= north] at (0.50, 0.80) {$x_0=y_0=z_0$};
  \node[e4c node] (bk-1) at (0.10, 0.80) {};
   \node[anchor= north] at (0.10, 0.80) {$y_{k-1}$};
  \node[e4c node] (b2k-1) at (0.30, 1.00) {}; 
   \node[anchor= south] at (0.30, 1.00) {$y_{2k-1}$};
  \node[e4c node] (ck-1) at (0.90, 0.80) {};
   \node[anchor= north] at (0.90, 0.80) {$z_{l-1}$};
  \node[e4c node] (c2k-1) at (0.70, 1.00) {}; 
   \node[anchor= south] at (0.70, 1.00) {$z_{2l}$};
  \node[e4c node] (b) at (-0.05, 0.90) {}; 
   \node[anchor= east] at (-.05, 0.90) {$y_k$};
    \node[e4c node] (z) at (1.10, 1.00) {};
   \node[anchor= south] at (1.10, 1.00) {$z_{l+1}$};

  \path[->,draw,thick]
  (b) edge[e4c edge]  (bk)
  (bk-1) edge[e4c edge]  (b)
  %(b1) edge[e4c edge]  (bk-1)
  (a0) edge[e4c edge]  (b1)
  (a0) edge[e4c edge]  (a1)
 % (bk) edge[e4c edge]  (b2k-1)
  (b2k-1) edge[e4c edge]  (a1)
  (a0) edge[e4c edge]  (c1)
 % (c1) edge[e4c edge]  (ck-1)
  (ck-1) edge[e4c edge]  (c)
  %(c) edge[e4c edge]  (ck)
  %(ck) edge[e4c edge]  (c2k-1)
  (c2k-1) edge[e4c edge]  (a1)
  (c) edge[e4c edge]  (z)
  (z) edge[e4c edge]  (ck)
  ;

  \path[->,draw,dotted,thick]
(b1) edge[e4c edge] (bk-1)
(bk) edge[e4c edge]  (b2k-1)
(ck) edge[e4c edge]  (c2k-1)
(c1) edge[e4c edge]  (ck-1)
;
\end{tikzpicture}
\caption{Oriented graph $\Theta(1,2k,2l+1)'$} 
\label{orientedtheta2}
\end{figure}

\begin{figure}[h]
\centering
\begin{tikzpicture}[x=12cm,y=12cm]
\tikzset{     
    e4c node/.style={circle,fill=black,minimum size=0.2cm,inner sep=0}, 
    e4c edge/.style={line width=1.5pt}
  }
  \node[e4c node] (a1) at (0.50, 1.00) {};
   \node[anchor= south] at (0.50, 1.00) {$x_1=y_{2k}=z_{2l+1}$};
  \node[e4c node] (b1) at (0.30, 0.80) {}; 
   \node[anchor= north] at (0.30, 0.80) {$y_1$};
  \node[e4c node] (bk) at (0.10, 1.00) {};
   \node[anchor= south] at (0.10, 1.00) {$y_{k+1}$};
  \node[e4c node] (c1) at (0.70, 0.80) {}; 
   \node[anchor= north] at (0.70, 0.80) {$z_1$};
  \node[e4c node] (ck) at (0.90, 1.00) {};
   \node[anchor= south] at (0.90, 1.00) {$z_{l+2}$};
  \node[e4c node] (c) at (1.10, 0.80) {};
   \node[anchor= north] at (1.10, 0.80) {$z_l$};
  \node[e4c node] (a0) at (0.50, 0.80) {}; 
   \node[anchor= north] at (0.50, 0.80) {$x_0=y_0=z_0$};
  \node[e4c node] (bk-1) at (0.10, 0.80) {};
   \node[anchor= north] at (0.10, 0.80) {$y_{k-1}$};
  \node[e4c node] (b2k-1) at (0.30, 1.00) {}; 
   \node[anchor= south] at (0.30, 1.00) {$y_{2k-1}$};
  \node[e4c node] (ck-1) at (0.90, 0.80) {};
   \node[anchor= north] at (0.90, 0.80) {$z_{l-1}$};
  \node[e4c node] (c2k-1) at (0.70, 1.00) {}; 
   \node[anchor= south] at (0.70, 1.00) {$z_{2l}$};
  \node[e4c node] (b) at (-0.05, 0.90) {}; 
   \node[anchor= east] at (-.05, 0.90) {$y_k$};
    \node[e4c node] (z) at (1.10, 1.00) {};
   \node[anchor= south] at (1.10, 1.00) {$z_{l+1}$};

  \path[->,draw,thick]
  (b) edge[e4c edge]  (bk)
  (bk-1) edge[e4c edge]  (b)
  %(b1) edge[e4c edge]  (bk-1)
  (a0) edge[e4c edge]  (b1)
  %(a0) edge[e4c edge]  (a1)
 % (bk) edge[e4c edge]  (b2k-1)
  (b2k-1) edge[e4c edge]  (a1)
  (a0) edge[e4c edge]  (c1)
 % (c1) edge[e4c edge]  (ck-1)
  (ck-1) edge[e4c edge]  (c)
  %(c) edge[e4c edge]  (ck)
  %(ck) edge[e4c edge]  (c2k-1)
  (c2k-1) edge[e4c edge]  (a1)
  %(c) edge[e4c edge]  (z)
  (z) edge[e4c edge]  (ck)
  ;

  \path[->,draw,dotted,thick]
(b1) edge[e4c edge] (bk-1)
(bk) edge[e4c edge]  (b2k-1)
(ck) edge[e4c edge]  (c2k-1)
(c1) edge[e4c edge]  (ck-1)
;
\end{tikzpicture}
\caption{Spanning tree of $\Theta(1,2k,2l+1)'$} 
\label{orientedtree2}
\end{figure}
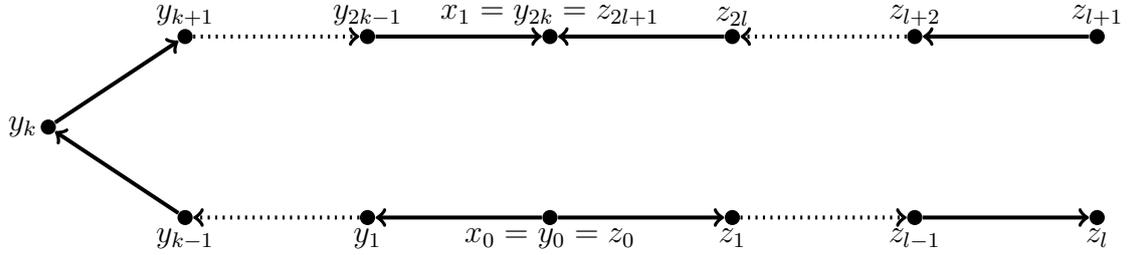

In the same way as in case (i), for $p,r\in \{1,\ldots,2k\}$ we obtain
$$\langle b_p, b_{r}\rangle=\begin{cases}
1 & \text{if}\ p=r\\
-\frac{1}{2} & \text{if}\ |p-r|=k\ \text{or}\ |p-r|=k+1\\
0 & \text{otherwise}
\end{cases}. $$

Now, consider the form between edges $b_p$ and $c_s$ for $p=1,\ldots,2k$ and $s\in \{1,\ldots,l\}\cup \{l+2,\ldots,2l+1\}$, because the edge $c_{l+1}$ is removed. Let $b_i=(y_{i-1},y_i)$ for $i\in \{1,\ldots,k-1\} \cup \{k+2,\ldots,2k\}$ and $z_j$ for $j=1,\ldots,2l+1$. Then we have $d(y_{i-1},z_j)<d(y_i,z_j)$ for $i=1,\ldots,k-1$ and $d(y_{i-1},z_j)>d(y_i,z_j)$ for $i=k+2,\ldots,2k$. Therefore, (\ref{1}) and (\ref{2}), respectively, imply that
$$\langle b_i,c_j\rangle=0,\quad\quad i\in \{1,\ldots,k-1\} \cup \{k+2,\ldots,2k\}, \ j\in \{1,\ldots,l\}\cup \{l+2,\ldots,2l+1\}. $$

We need to verify the relation between $b_k$ and $c_j$, as well as $b_{k+1}$ and $c_j$ for $j\in \{1,\ldots,l\}\cup \{l+2,\ldots,2l+1\}$. Clearly, $d(y_{k-1},z_j)=k-1+j<d(y_k,z_j)=k+j$ for $j\in \{0,\ldots,l\}$, thus (\ref{1}) implies that $\langle b_k,c_j\rangle=0$ for $j\in \{1,\ldots,l\}$. Whereas for $j\in \{l+1,\ldots,2l+1\}$ we have $d(y_{k-1},z_j)=d(y_k,z_j)=k+2l-j$, thus (\ref{3}) implies that $\langle b_k,c_j\rangle=0$ for $j\in \{l+2,\ldots,2l+1\}$. Since the edge $c_{l+1}$ does not belong to the tree, we obtain
$$\langle b_k,c_j\rangle=0, \quad\quad  j\in \{1,\ldots,l\}\cup \{l+2,\ldots,2l+1\}.$$
In a similar manner, we can show that 
$$\langle b_{k+1},c_j\rangle=0, \quad\quad j\in \{1,\ldots,l\}\cup \{l+2,\ldots,2l+1\}. $$
Hence, we obtain
$$\langle b_p,c_s\rangle=0,\quad \quad p=1,\ldots,2k, \  s\in \{1,\ldots,l\}\cup \{l+2,\ldots,2l+1\}.$$

Now, consider the relation between edges $c_s$ and $c_t$ for $s,t\in \{1,\ldots,l\}\cup \{l+2,\ldots,2l+1\}$, because the edge $c_{l+1}$ is removed. Clearly,
$$\langle c_s,c_s\rangle=1,\quad\quad s\in \{1,\ldots,l\}\cup \{l+2,\ldots,2l+1\}. $$
Note that the distance between each pair of two vertices from $P_{\gamma+1}$ is at most $l+1$, because the paths $P_{\gamma+1}$ and $P_{2}$ create the cycle on $2l+2$ vertices. Consider the edges $c_i=(z_{i-1},z_i)$ and $c_{i+l+1}=(z_{i+l},z_{i+l+1})$ for $i=1,\ldots,l$. We have $d(z_{i-1},z_{i+l})=l+1>d(z_i,z_{i+l})=l$ and $d(z_{i-1},z_{i+l+1})=i+2l-(i+l+1)=l<d(z_i,z_{i+l+1})=l+1$. Hence (\ref{8}) implies that 
$$\langle c_i,c_{i+l+1}\rangle=-1, \quad\quad i=1,\ldots,l. $$

Let $0<j<l$ and consider the edge $b_{i+j+1}=(y_{i+j},y_{i+j+1})$. We have $d(z_{i-1},z_{i+j})=j+1>d(z_i,z_{i+j})=j$ and $d(z_{i-1},z_{i+j+1})=j+2>d(z_i,z_{i+j+1})=j+1$. Hence (\ref{2}) implies that 
$$\langle c_i, c_{i+j+1}\rangle=0, \quad\quad i=1,\ldots,l,\ j=1,\ldots,l-1.  $$
In similar manner we can show that $\langle c_i, c_{i+j+1}\rangle=0 $ for $j>k$ and $i+j\le 2l$.

Since the relation $\langle e,e'\rangle$ is symmetric, for $s,t\in \{1,\ldots,l\}\cup \{l+2,\ldots,2l+1\}$ we obtain
$$\langle c_s, c_t\rangle=\begin{cases}
1 & \text{if}\ s=t\\
-1 & \text{if}\ |s-t|=l+1\\
0 & \text{otherwise}
\end{cases}. $$

Now, we summarize all non-zero values of relations. Clearly, $$\langle b_p,b_p\rangle=\langle c_s,c_s\rangle=1, \quad\quad p=1,\ldots,2k,\ s\in \{1,\ldots,l\}\cup \{l+2,\ldots,2l+1\}.$$
In other cases, we have 
\begin{align*}
   &\langle b_p,b_{p+k}\rangle=\langle b_p,b_{p+k+1}\rangle=-\frac{1}{2},\quad p=1,\ldots,k-1;\\
   &\langle b_k,b_{2k}\rangle=-\frac{1}{2};\\
   &\langle b_{k+1},b_1\rangle=-\frac{1}{2};\\
   &\langle b_p,b_{p-k}\rangle=\langle b_p,b_{p-k-1}\rangle=-\frac{1}{2},\quad p=k+2,\ldots,2k;\\
   &\langle c_s,c_{s+l+1}\rangle=-1,\quad\quad\quad\quad\quad\quad\ \ s=1,\ldots,l;\\
\end{align*}
and finally
$$\langle c_s,c_{s-l-1}\rangle=-1, \quad\quad s=l+2,\ldots,2l+1.$$
Recall that the RKHS is on the set of edges $E(\Theta(1,2k,2l+1)')$, while $c_{l+1}\notin E(\Theta(1,2k,2l+1)')$ by construction.

Now, we define:
    \begin{itemize}
        \item $e_h:=b_h$ for $h=1,\ldots,2k$;
        \item $e_{2k+i}:=c_i$ for $i=1,\ldots,l$ 
        \item $e_{2k+j-1}:=c_j$ for $j=l+2,\ldots,2l+1$
    \end{itemize}
    and define a matrix $M:=[\langle e_r,e_t\rangle]_{r,t=1,\ldots,2k+2l}$. Then

\[
M=\begin{bmatrix}
  B & 0 \\
  0 & C
\end{bmatrix},
\]
where
\[
B=\begin{bmatrix}
    I_k & A\\
    A^\top & I_k
\end{bmatrix},\quad
A=
\begin{bmatrix}
-\frac12 & -\frac12 &  &  & \\
 & -\frac12 & \ddots &  & \\
 &  & \ddots & \ddots & \\
 &  &  & -\frac12 & -\frac12\\
 & &  &  & -\frac12
\end{bmatrix}_{k\times k},\quad \text{and}\quad
C=\begin{bmatrix}
    I_l & -I_l\\
    -I_l & I_l
\end{bmatrix}.
\]
Clearly, the matrix $A$ is the same as in the case (i).

    Note that each diagonal element of $M$ equals 1 and the sum of the absolute values of the non-diagonal entries in each row is at most 1. Hence, all eigenvalues of $M$ belong to the interval $[0,2]$, see \cite{HorJ85}. Thus $M$ is positive semidefinite, and by Theorem \ref{tree} the theta graph $\Theta(1,\beta,\gamma)$ is of QE class for even $\beta$ and odd $\gamma$ or odd $\beta$ and even $\gamma$. This concludes the proof of Theorem~\ref{alpha1}.
\end{proof}

We conclude this Section with the following corollary.

\begin{corollary}
Let $2\le \beta \le \gamma$. If at least one of $\beta$ and $\gamma$ is odd, then $\QEC(\Theta(1,\beta,\gamma))=~0$. If $\beta$ and $\gamma$ are even, then $\QEC(1,\beta,\gamma)\in [-\frac{1}{4\cos^2\frac{\pi}{\gamma+1}}, 0]$.   
\end{corollary}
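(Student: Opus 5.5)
The plan is to get both bounds from comparisons with cycles. The upper bound $\QEC(\Theta(1,\beta,\gamma))\le 0$ is immediate from Theorem~\ref{alpha1}, since a graph is of QE class if and only if its QE constant is nonpositive. For the lower bounds I will use the monotonicity noted in Section~\ref{prel}: if $H$ is isometrically embedded in $G$, then $\QEC(H)\le\QEC(G)$. I will apply it with $H$ a cycle contained in the theta graph.

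\emph{Step 1: the cycles are isometric.} The cycle $C_{\beta+1}$ formed by the edge $x_0x_1$ and the path $P_{\beta+1}$ is isometrically embedded in $\Theta(1,\beta,\gamma)$. Take two vertices $u,v$ of this cycle. Any walk between them that leaves the cycle must traverse the whole path $P_{\gamma+1}$ between $x_0$ and $x_1$. Replacing that detour by the edge $x_0x_1$ shortens it, because $\gamma\ge 2>1$. So the shortest walk can be taken inside the cycle, and the cycle distance equals the graph distance. The same argument, using $\beta\ge2$, shows that $C_{\gamma+1}$, formed by $x_0x_1$ and $P_{\gamma+1}$, is isometrically embedded. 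This verification is the only point needing care, and it works precisely because $\alpha=1$.

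\emph{Step 2: at least one of $\beta,\gamma$ odd.} If $\beta$ is odd, then $C_{\beta+1}$ is an even cycle; if $\gamma$ is odd, then $C_{\gamma+1}$ is. I will invoke the known fact (Obata, Młotkowski et al., cf.\ \cite{Obata3}) that $\QEC(C_{2m})=0$. Monotonicity then gives $\QEC(\Theta(1,\beta,\gamma))\ge 0$, and together with the upper bound this yields equality.

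\emph{Step 3: $\beta,\gamma$ both even.} Now $C_{\gamma+1}$ is an odd cycle, and its QE constant is known to be $\QEC(C_{2m+1})=-\frac{1}{4\cos^2\frac{\pi}{2m+1}}$ (again \cite{Obata3}). With $2m+1=\gamma+1$, monotonicity gives $\QEC(\Theta(1,\beta,\gamma))\ge -\frac{1}{4\cos^2\frac{\pi}{\gamma+1}}$, and the upper bound $0$ completes the interval. The cycle $C_{\beta+1}$ would give the bound $-\frac{1}{4\cos^2\frac{\pi}{\beta+1}}$ instead, which is never larger since $\beta\le\gamma$; so $C_{\gamma+1}$ gives the sharper bound.

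The main obstacle is not computational: it is making sure the cited values of $\QEC$ for even and odd cycles are used in exactly the form stated in the literature, and that the isometric-embedding claim of Step 1 is stated rigorously.
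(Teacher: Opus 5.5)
Your proposal is correct and follows the paper's proof essentially step for step. The upper bound $0$ comes from Theorem~\ref{alpha1}, and the lower bounds come from the isometrically embedded cycles $C_{\beta+1}$ and $C_{\gamma+1}$, using Obata's values $\QEC(C_{2m})=0$ and $\QEC(C_{2m+1})=-\frac{1}{4\cos^2\frac{\pi}{2m+1}}$ together with monotonicity of $\QEC$. Your Step~1 also justifies that the two cycles are isometrically embedded, which the paper only asserts.
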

\begin{proof}
The graph $\Theta(1,\beta,\gamma)$ is of QE class, hence $\QEC(\Theta(1,\beta,\gamma))\le 0$. 

If at least one of $\beta$ and $\gamma$ is odd, then $\Theta(1,\beta,\gamma)$ contains a cycle on an even number of vertices as an isometrically embedded subgraph, such a graph has a QE constant equal to 0, see \cite{Obata3}. Hence $\QEC(\Theta(1,\beta,\gamma))=0$.

If $\beta$ and $\gamma$ are even, then $\Theta(1,\beta,\gamma)$ contains two cycles, both of an odd number of vertices $\beta+1$ and $\gamma+1$ respectively, as isometrically embedded subgraphs. For these cycles, we have $\QEC(C_{\beta+1})=-\frac{1}{4\cos^2\frac{\pi}{\beta+1}}$ and $\QEC(C_{\gamma+1})=-\frac{1}{4\cos^2\frac{\pi}{\gamma+1}}$, see \cite{Obata3}. Clearly, $-\frac{1}{4\cos^2\frac{\pi}{\beta+1}}\le -\frac{1}{4\cos^2\frac{\pi}{\gamma+1}}$. Hence, for even $\beta$ and $\gamma$ we obtain $\QEC(1,\beta,\gamma)\ge-\frac{1}{4\cos^2\frac{\pi}{\gamma+1}}$, which concludes the proof.
\end{proof}

\section{Quadratic Embedding of $\Theta(2,3,\gamma)$} \label{sec.alpha2}

In this section, we prove the following theorem to achieve the characterization of theta graphs according to the QE property.

\begin{theorem}\label{alpha2}
    Assume that $\gamma\ge 3$.
Then $\Theta(2,3,\gamma)$ is of QE class if and only if either $\gamma\in \{3,5,7\}$.
\end{theorem}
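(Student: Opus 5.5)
Note first that for even $\gamma$ the graph $\Theta(2,3,\gamma)$ is already known to be of non-QE class by \cite{MSWtheta}, so the statement concerns odd $\gamma\ge 3$. The proof splits into a positive part for $\gamma\in\{3,5,7\}$, handled with Theorem~\ref{tree}, and a negative part for odd $\gamma\ge 9$, handled with an explicit test vector for Schoenberg's criterion.

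\emph{Positive part.} Let $P_3=[x_0,x_1,x_2]$, $P_4=[y_0,\dots,y_3]$ and $P_{\gamma+1}=[z_0,\dots,z_\gamma]$. Orient every edge along its path, from the common endpoint $x_0=y_0=z_0$ towards $x_2=y_3=z_\gamma$. Delete one edge of $P_4$ and the ``middle'' edge $c_{(\gamma+1)/2}$ of the long path to obtain a spanning tree; this mimics case (ii) of Theorem~\ref{alpha1}, where cutting the middle of the odd-length path made its Gram block become $\begin{bmatrix} I & -I\\ -I & I\end{bmatrix}$. Using the classification (\ref{1})--(\ref{9}), compute the kernel $\langle e,e'\rangle$ on the $\gamma+3$ tree edges. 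Every entry lies in $\{0,\pm\frac12,\pm1\}$, and the distances follow from the cycle structure: the cycles have lengths $5$, $\gamma+2$ and $\gamma+3$. Then verify that the resulting matrix $M$ is positive semidefinite.

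\begin{itemize}
\item I would first try diagonal dominance, as in Section~\ref{sec.alpha1}.
\item If that fails, because rows touching both short paths exceed absolute sum $1$, I would exhibit $M=LL^\top$ explicitly. For the three fixed sizes ($6\times 6$, $8\times 8$, $10\times 10$) one can instead compute leading principal minors, or a Cholesky/$LDL^\top$ factorization with exact rational arithmetic.
\end{itemize}

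Since these are three finite checks, a direct verification is acceptable.

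\emph{Negative part.} For odd $\gamma=2m+1\ge 9$, construct $f$ with $\langle\mathbf 1,f\rangle=0$ and $\langle f,Df\rangle>0$. By Schoenberg \cite{Sch2}, such a vector shows that $D$ is not conditionally negative definite. Natural candidates are supported on a few vertices:
\begin{itemize}
\item the ``antipodal'' region of the long path, around $z_{m}$, $z_{m+1}$, $z_{m+2}$;
\item the short vertices $x_1$, $y_1$, $y_2$;
\item possibly the endpoints.
\end{itemize}
Choose the weights so that the large distances from $x_1,y_1,y_2$ to the middle of the long path dominate. The key intuition is that as $\gamma$ grows, the middle vertices of the long path sit at distance roughly $m$ from every vertex of the small $5$-cycle, while the small cycle $\Theta(2,3)$ itself has a fixed negative defect. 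One must balance a positive contribution that grows in $m$ against that bounded negative part. I would set up $f$ with a few free parameters, compute $\langle f,Df\rangle$ as an explicit quadratic form in them with coefficients linear in $m$, and show it is positive once $m\ge 4$. Cases $m\le 3$ must fail this, which is consistent with part one.

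\emph{Main obstacle.} The main obstacle is finding the right test vector in the negative part, since the threshold $\gamma=9$ is sharp. A crude choice will only work for large $\gamma$. I would first guess $f$ from a numerically computed QEC eigenvector of $\Theta(2,3,9)$, simplify it to a symmetric support pattern, and check that the same pattern, with entries depending on $m$, stays positive for all $m\ge 4$. If a uniform vector is hard to find, an alternative is to note that $\Theta(2,3,\gamma)$ with larger odd $\gamma$ does not contain $\Theta(2,3,9)$ isometrically. In that case the inequality must be proved directly for each $m$, with the dependence on $m$ made explicit in the computation.
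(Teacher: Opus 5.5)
\textbf{Positive direction.} Your treatment of the ``if'' direction follows the paper's route. You apply Winkler's theorem on a spanning tree and then check positive semidefiniteness for $\gamma=3,5,7$. The paper deletes the middle edges of both the $y$- and the $z$-path, and it verifies the spectra of the resulting $6\times6$, $8\times8$ and $10\times10$ matrices $2M$ through their characteristic polynomials. Winkler's theorem makes the outcome independent of the tree and orientation, so your choice of tree is harmless. However, you only announce this computation; you do not carry it out. Diagonal dominance will not do the job: already in the paper's $2M$ for $\Theta(2,3,3)$ a row has off-diagonal absolute sum $3>2$.

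\textbf{Negative direction: no certificate.} The genuine gap is the ``only if'' direction for odd $\gamma\ge 9$. You never exhibit a vector $f$, so non-embeddability is not proved. Moreover, the heuristic you propose to guide the search points the wrong way.

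Write $\gamma=2k+7$ and take $f$ supported on a bounded cluster $N$ around the $5$-cycle and a bounded window $F$ of consecutive middle vertices of the long path. For such supports and $k\ge1$:
\begin{itemize}
\item distances inside $N$ and inside $F$ do not depend on $k$;
\item every distance between $u\in N$ and $v\in F$ equals $k+c_{uv}$, with $c_{uv}$ independent of $k$.
\end{itemize}
Hence, if $\langle\mathbf{1},f\rangle=0$ and $s=\sum_{N}f$, then $\langle f,Df\rangle=C-2ks^2$. The $\gamma$-dependent part is therefore never ``a positive contribution that grows in $m$''; it can only hurt.

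\textbf{The missing idea.} You should kill this term by making the weights sum to zero on $N$ and on $F$ separately. Then $\langle f,Df\rangle$ is a single constant, and one computation settles every $k\ge1$ at once. This also removes your worry about having to prove the inequality separately for each $m$.

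The paper does exactly this:
\begin{itemize}
\item $N=\{z_{2k+6},x_0,x_1,x_2,y_1,y_2,z_1\}$ with weights $236,243,-546,243,-206,-206,236$;
\item $F=\{z_{k+1},\dots,z_{k+6}\}$ with weights $119,234,-353,-353,234,119$.
\end{itemize}
The distance submatrix is $A$ plus $k$ times the $0/1$ matrix supported on the $N\times F$ and $F\times N$ blocks. That $k$-part contributes $2k\cdot 0\cdot 0=0$, and $\langle g,Ag\rangle=16272>0$. The condition $k\ge1$ is exactly what keeps the window $F$ separated from $z_1$ and $z_{\gamma-1}$, so the distance formulas hold.

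Note that this certificate puts large weight on $z_1$ and $z_{\gamma-1}$ and uses a six-vertex middle window. Your suggested support (three middle vertices, $x_1,y_1,y_2$, possibly $x_0,x_2$) includes neither, and nothing in your proposal shows that such a smaller support can produce a positive value. As it stands, the non-QE claim for $\gamma\ge 9$ remains unproved in your proposal.
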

\begin{proof}
The non-embeddability of graphs $\Theta(2,3,\gamma)$ for even $\gamma$ was shown in \cite{MSWtheta}, hence let $\gamma\ge3$ be odd. 

The proof is divided into two parts. First, we apply Theorem \ref{tree} to show the embeddability of graphs $\Theta(2,3,3)$, $\Theta(2,3,5)$, and $\Theta(2,3,7)$. In the second part, we present the vectors which show that the distance matrices of graphs $\Theta(2,3,2k+7)$ for $k\ge1$ are not conditionally negative definite.

(i) Consider the graph $\Theta(2,3,3)$ and its oriented spanning tree, see Figure \ref{233}.

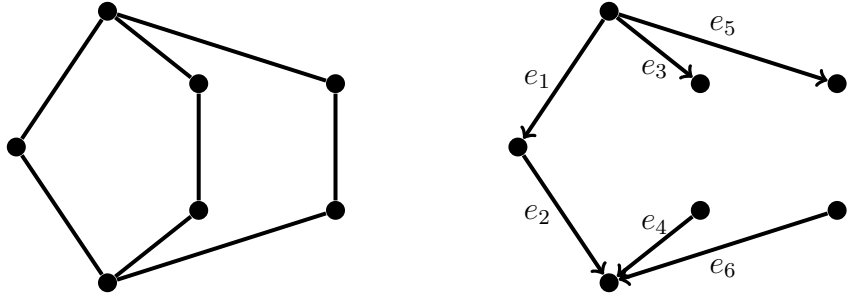
\begin{figure}[h]
\centering
\begin{tikzpicture}[x=12cm,y=12cm]
\tikzset{     
    e4c node/.style={circle,fill=black,minimum size=0.25cm,inner sep=0}, 
    e4c edge/.style={line width=1.5pt}
  }
  \node[e4c node] (1) at (0.15, 0.85) {}; 
  \node[e4c node] (2) at (0.05, 0.70) {}; 
  \node[e4c node] (3) at (0.15, 0.55) {}; 
  \node[e4c node] (4) at (0.25, 0.77) {}; 
  \node[e4c node] (5) at (0.25, 0.63) {}; 
  \node[e4c node] (6) at (0.40, 0.77) {}; 
  \node[e4c node] (7) at (0.40, 0.63) {}; 
  \node[e4c node] (8) at (0.70, 0.85) {}; 
  \node[e4c node] (9) at (0.60, 0.70) {}; 
  \node[e4c node] (10) at (0.70, 0.55) {}; 
  \node[e4c node] (11) at (0.80, 0.77) {}; 
  \node[e4c node] (12) at (0.80, 0.63) {}; 
  \node[e4c node] (13) at (0.95, 0.77) {}; 
  \node[e4c node] (14) at (0.95, 0.63) {};

  \path[draw,thick]
  (1) edge[e4c edge]  (2)
  (7) edge[e4c edge]  (3)
  (6) edge[e4c edge]  (7)
  (1) edge[e4c edge]  (6)
  (5) edge[e4c edge]  (3)
  (4) edge[e4c edge]  (5)
  (1) edge[e4c edge]  (4)
  (2) edge[e4c edge]  (3)
  ;

   \path[->,draw,thick]
  (8) edge[e4c edge] node [anchor= east] {$e_1$} (9)
  (9) edge[e4c edge] node [anchor= east] {$e_2$} (10)
  (8) edge[e4c edge] node [anchor= north] {$e_3$} (11)
  (12) edge[e4c edge] node [anchor= south] {$e_4$} (10)
  (8) edge[e4c edge] node [anchor= south] {$e_5$} (13)
  (14) edge[e4c edge] node [anchor= north] {$e_6$} (10)
  ;
\end{tikzpicture}

\caption{The graph $\Theta(2,3,3)$ and its oriented spanning tree} \label{233}
\end{figure}

According to Winkler's theorem, the matrix $2M=[2\langle e_i,e_j\rangle]_{i,j=1,\ldots,6}$ of the graph $\Theta(2,3,3)$ has the form
\[2M=\begin{bmatrix}
2 & 0 & 0 & 1 & 0 & 1 \\
0 & 2 & 1 & 0 & 1 & 0 \\
0 & 1 & 2 & -1 & 0 & 1 \\
1 & 0 & -1 & 2 & 1 & 0 \\
0 & 1 & 0 & 1 & 2 & -1\\
1 & 0 & 1 & 0 & -1 & 2
\end{bmatrix}.
\]
The characteristic polynomial of $2M$ has the form
$$\det(2M-\lambda I)=\lambda^6-12\lambda^5+52\lambda^4-96\lambda^3+68\lambda^2-16\lambda=\lambda(\lambda-4)(\lambda-2-\sqrt{2})^2(\lambda-2+\sqrt{2})^2$$
Therefore, the spectrum of $2M$ is given by
$$\sigma(2M)=\bigg\{4,2+\sqrt{2},2+\sqrt{2},2-\sqrt{2},2-\sqrt{2},0\bigg\}.$$
All eigenvalues of $2M$ are non-negative, hence $M$ is positive semidefinite, and by Theorem \ref{tree}, the graph $\Theta(2,3,3)$ is of QE class.

Similarly, consider the graph $\Theta(2,3,5)$ and its oriented spanning tree, see Figure \ref{235}.

\begin{figure}[H]
\centering
\begin{tikzpicture}[x=12cm,y=12cm]
\tikzset{     
    e4c node/.style={circle,fill=black,minimum size=0.25cm,inner sep=0}, 
    e4c edge/.style={line width=1.5pt}
  }
  \node[e4c node] (1) at (0.15, 0.85) {}; 
  \node[e4c node] (2) at (0.05, 0.70) {}; 
  \node[e4c node] (3) at (0.15, 0.55) {}; 
  \node[e4c node] (4) at (0.25, 0.77) {}; 
  \node[e4c node] (5) at (0.25, 0.63) {}; 
  \node[e4c node] (6) at (0.30, 0.80) {};
  \node[e4c node] (7) at (0.45, 0.75) {};
  \node[e4c node] (8) at (0.45, 0.65) {};
 \node[e4c node] (9) at (0.30, 0.60) {};

   \node[e4c node] (11) at (0.75, 0.85) {}; 
  \node[e4c node] (12) at (0.65, 0.70) {}; 
  \node[e4c node] (13) at (0.75, 0.55) {}; 
  \node[e4c node] (14) at (0.85, 0.77) {}; 
  \node[e4c node] (15) at (0.85, 0.63) {}; 
  \node[e4c node] (16) at (0.90, 0.80) {};
  \node[e4c node] (17) at (1.05, 0.75) {};
  \node[e4c node] (18) at (1.05, 0.65) {};
 \node[e4c node] (19) at (0.90, 0.60) {};

  \path[draw,thick]
  (1) edge[e4c edge]  (2)
  (1) edge[e4c edge]  (4)
  (2) edge[e4c edge]  (3)
  (4) edge[e4c edge]  (5)
  (5) edge[e4c edge]  (3)
  (1) edge[e4c edge]  (6)
  (6) edge[e4c edge]  (7)
  (7) edge[e4c edge]  (8)
  (8) edge[e4c edge]  (9)
  (9) edge[e4c edge]  (3)
  ;

   \path[->,draw,thick]
  (11) edge[e4c edge] node [anchor= east] {$e_1$} (12)
  (11) edge[e4c edge] node [anchor= north] {$e_3$} (14)
  (12) edge[e4c edge] node [anchor= east] {$e_2$} (13)
  %(14) edge[e4c edge]  (15)
  (15) edge[e4c edge] node [anchor= south] {$e_4$} (13)
  (11) edge[e4c edge] node [anchor= south] {$e_5$} (16)
  (16) edge[e4c edge] node [anchor= south] {$e_6$} (17)
  %(17) edge[e4c edge]  (18)
  (18) edge[e4c edge] node [anchor= north] {$e_7$} (19)
  (19) edge[e4c edge] node [anchor= north] {$e_8$} (13)
  ;
  ;
\end{tikzpicture}

\caption{The graph $\Theta(2,3,5)$ and its oriented spanning tree} \label{235}
\end{figure}
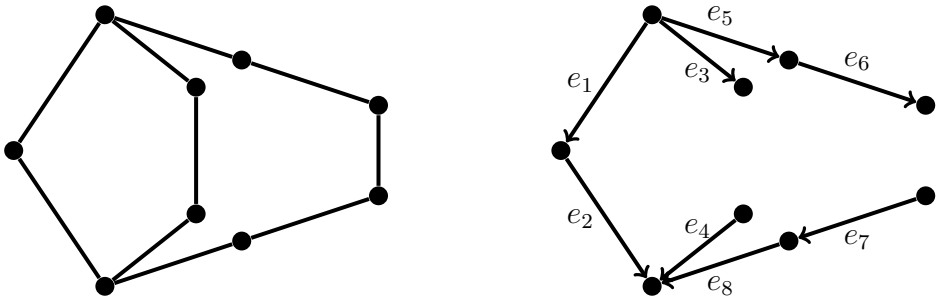

Now, the matrix $2M=[2\langle e_i,e_j\rangle]_{i,j=1,\ldots,8}$ of the graph $\Theta(2,3,5)$ has the form
\[2M=\begin{bmatrix}
2 & 0 & 0 & 1 & 0 & 0 & 1 & 0 \\
0 & 2 & 1 & 0 & 0 & 1 & 0 & 0 \\
0 & 1 & 2 & -1 & 0 & 0 & 1 & 0 \\
1 & 0 & -1 & 2 & 0 & 1 & 0 & 0 \\
0 & 0 & 0 & 0 & 2 & 0 & -1 & -1\\
0 & 1 & 0 & 1 & 0 & 2 & 0 & -1\\
1 & 0 & 1 & 0 & -1 & 0 & 2 & 0\\
0 & 0 & 0 & 0 & -1 & -1 & 0 & 2
\end{bmatrix}.
\]
The characteristic polynomial of $2M$ has the form
$$\det(2M-\lambda I)=\lambda^8-16\lambda^7+102\lambda^6-328\lambda^5+553\lambda^4-456\lambda^3+147\lambda^2-12\lambda$$
$$=\lambda(\lambda-4)(\lambda^3-6\lambda^2+9\lambda-3)(\lambda^3-6\lambda^2+9\lambda-1)$$
$$=\lambda(\lambda-4)\prod_{k\in\{1,2,4,5,7,8\}}
\Big(\lambda-2-2\cos\frac{k\pi}{9}\Big).$$
Therefore, the spectrum of $2M$ is given by
$$\sigma(2M)=\Big\{4,2+2\cos{\frac{\pi}{9}},2+2\cos{\frac{2\pi}{9}},2+2\cos{\frac{4\pi}{9}},2+2\cos{\frac{5\pi}{9}}, 2+2\cos{\frac{7\pi}{9}},2+2\cos{\frac{8\pi}{9}} ,0\Big\}.$$
All eigenvalues of $2M$ are non-negative, hence $M$ is positive semidefinite, and by Theorem \ref{tree}, the graph $\Theta(2,3,5)$ is of QE class.

To conclude this part, consider the graph $\Theta(2,3,7)$ and its oriented spanning tree, see Figure \ref{237}.

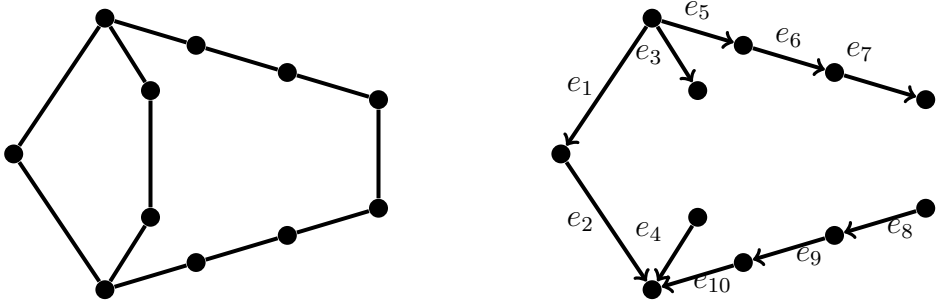
\begin{figure}[H]
\centering
\begin{tikzpicture}[x=12cm,y=12cm]
\tikzset{     
    e4c node/.style={circle,fill=black,minimum size=0.25cm,inner sep=0}, 
    e4c edge/.style={line width=1.5pt}
  }
  \node[e4c node] (1) at (0.15, 0.85) {}; 
  \node[e4c node] (2) at (0.05, 0.70) {}; 
  \node[e4c node] (3) at (0.15, 0.55) {}; 
  \node[e4c node] (4) at (0.20, 0.77) {}; 
  \node[e4c node] (5) at (0.20, 0.63) {}; 
  \node[e4c node] (6) at (0.25, 0.82) {};
  \node[e4c node] (7) at (0.35, 0.79) {};
  \node[e4c node] (21) at (0.45, 0.76) {};
  \node[e4c node] (22) at (0.45, 0.64) {};
  \node[e4c node] (8) at (0.35, 0.61) {};
 \node[e4c node] (9) at (0.25, 0.58) {};

  \node[e4c node] (11) at (0.75, 0.85) {}; 
  \node[e4c node] (12) at (0.65, 0.70) {}; 
  \node[e4c node] (13) at (0.75, 0.55) {}; 
  \node[e4c node] (14) at (0.80, 0.77) {}; 
  \node[e4c node] (15) at (0.80, 0.63) {}; 
  \node[e4c node] (16) at (0.85, 0.82) {};
  \node[e4c node] (17) at (0.95, 0.79) {};
  \node[e4c node] (121) at (1.05, 0.76) {};
  \node[e4c node] (122) at (1.05, 0.64) {};
  \node[e4c node] (18) at (0.95, 0.61) {};
 \node[e4c node] (19) at (0.85, 0.58) {};

  \path[draw,thick]
  (1) edge[e4c edge]  (2)
  (1) edge[e4c edge]  (4)
  (2) edge[e4c edge]  (3)
  (4) edge[e4c edge]  (5)
  (5) edge[e4c edge]  (3)
  (1) edge[e4c edge]  (6)
  (6) edge[e4c edge]  (7)
  %(7) edge[e4c edge]  (8)
  (8) edge[e4c edge]  (9)
  (9) edge[e4c edge]  (3)
  (7) edge[e4c edge]  (21)
  (21) edge[e4c edge]  (22)
  (22) edge[e4c edge]  (8)
  ;

   \path[->,draw,thick]
  (11) edge[e4c edge] node [anchor= east] {$e_1$} (12)
  (11) edge[e4c edge] node [anchor= east] {$e_3$} (14)
  (12) edge[e4c edge] node [anchor= east] {$e_2$} (13)
  %(14) edge[e4c edge]  (15)
  (15) edge[e4c edge] node [anchor= south east] {$e_4$} (13)
  (11) edge[e4c edge] node [anchor= south] {$e_5$} (16)
  (16) edge[e4c edge] node [anchor= south] {$e_6$} (17)
  %(7) edge[e4c edge]  (8)
  (18) edge[e4c edge]  (19)  node [anchor= north east] {$e_9$}
  (19) edge[e4c edge]  (13) node [anchor= north east] {$e_{10}$}
  (17) edge[e4c edge]  (121) node [anchor= south west] {$e_7$}
  %(121) edge[e4c edge]  (122)
  (122) edge[e4c edge]  (18) node [anchor= north east] {$e_8$} (17)
  ;
  
\end{tikzpicture}

\caption{The graph $\Theta(2,3,7)$ and its oriented spanning tree} \label{237}
\end{figure}

Now, the matrix $2M=[2\langle e_i,e_j\rangle]_{i,j=1,\ldots,10}$ of the graph $\Theta(2,3,7)$ has the form
\[2M=\begin{bmatrix}
2 & 0 & 0 & 1 & 0 & 0 & 0 & 1 & 0 & 0 \\
0 & 2 & 1 & 0 & 0 & 0 & 1 & 0 & 0 & 0\\
0 & 1 & 2 & -1 & 0 & 0 & 0 & 1 & 0 & 0 \\
1 & 0 & -1 & 2 & 0 & 0 & 1 & 0 & 0 & 0\\
0 & 0 & 0 & 0 & 2 & 0 & 0 & -1 & -1 & 0\\
0 & 0 & 0 & 0 & 0 & 2 & 0 & 0 & -1 & -1\\
0 & 1 & 0 & 1 & 0 & 0 & 2 & 0 & 0 & -1\\
1 & 0 & 1 & 0 & -1 & 0 & 0 & 2 & 0 & 0\\
0 & 0 & 0 & 0 & -1 & -1 & 0 & 0 & 2 & 0\\
0 & 0 & 0 & 0 & 0 & -1 & -1 & 0 & 0 & 2
\end{bmatrix}.
\]
The characteristic polynomial of $2M$ has the form
$$\det(2M-\lambda I)=\lambda^{10}-20\lambda^9+168\lambda^8-768\lambda^7+2068\lambda^6-3312\lambda^5+3023\lambda^4-1400\lambda^3+240\lambda^2$$
$$=\lambda^2(\lambda-4)^2(\lambda-3)(\lambda-1)\bigg(\lambda-\frac{5+\sqrt{5}}{2}\bigg)\bigg(\lambda-\frac{5-\sqrt{5}}{2}\bigg)\bigg(\lambda-\frac{3+\sqrt{5}}{2}\bigg)\bigg(\lambda-\frac{3-\sqrt{5}}{2}\bigg).$$
Therefore, the spectrum of $2M$ is given by
$$\sigma(2M)=\bigg\{4,4,\frac{5+\sqrt{5}}{2},3,\frac{3+\sqrt{5}}{2},\frac{5-\sqrt{5}}{2},1,\frac{3-\sqrt{5}}{2},0,0\bigg\}.$$
All eigenvalues of $2M$ are non-negative, hence $M$ is positive semidefinite, and by Theorem \ref{tree}, the graph $\Theta(2,3,7)$ is of QE class.

(ii) We prove the non-embeddability of graphs $\Theta(2,3,\gamma)$ for odd $\gamma\ge9$. Consider the graph $\Theta(2,3,2k+7)$, with $k\ge1$, see Figure \ref{232k+7}.

\begin{figure}[h]
\centering
\begin{tikzpicture}[x=10cm,y=8cm]
\tikzset{     
    e4c node/.style={circle,fill=black,minimum size=0.25cm,inner sep=0}, 
    e4c edge/.style={line width=1.5pt}
  }
  \node[e4c node] (x0) at (0.15, 0.40) {}; 
  \node[anchor= north] at (0.15, 0.39) {$x_0$};
  \node[e4c node] (x1) at (0.00, 0.70) {}; 
  \node[anchor=east] at (-0.01, 0.70) {$x_1$};
  \node[e4c node] (x2) at (0.15, 1.00) {};
  \node[anchor=south] at (0.15, 1.01) {$x_2$};
  \node[e4c node] (y1) at (0.15, 0.60) {};
  \node[anchor=west] at (0.16, 0.60) {$y_1$};
  \node[e4c node] (y2) at (0.15, 0.80) {};
  \node[anchor=west] at (0.16, 0.80) {$y_2$};
  \node[e4c node] (z1) at (0.35, 0.40) {};
  \node[anchor=north] at (0.35, 0.39) {$z_1$};
  \node[e4c node] (z2k) at (0.35, 1.00) {};
  \node[anchor=south] at (0.35, 1.01) {$z_{2k+6}$};
  \node[e4c node] (zk) at (0.70, 0.40) {};
  \node[anchor=north] at (0.70, 0.39) {$z_{k+1}$};
  \node[e4c node] (5) at (0.70, 1.00) {};
  \node[anchor=south] at (0.70, 1.01) {$z_{k+6}$};
  \node[e4c node] (4) at (0.90, 1.00) {};
  \node[anchor=south] at (0.90, 1.01) {$z_{k+5}$};
  \node[e4c node] (a) at (0.90, 0.40) {};
  \node[anchor=north] at (0.90, 0.39) {$z_{k+2}$};
  \node[e4c node] (b) at (1.00, 0.60) {};
  \node[anchor=west] at (1.01, 0.60) {$z_{k+3}$};
  \node[e4c node] (c) at (1.00, 0.80) {}; 
  \node[anchor=west] at (1.01, 0.80) {$z_{k+4}$};

  \path[draw,thick]
  (z1) edge[e4c edge, dotted](zk)
  (x0) edge[e4c edge]  (z1)
  (x1) edge[e4c edge]  (x2)
  (x0) edge[e4c edge]  (x1)
  (y2) edge[e4c edge]  (x2)
  (y1) edge[e4c edge]  (y2)
  (x0) edge[e4c edge]  (y1)
  (zk) edge[e4c edge]  (a)
  (a) edge[e4c edge]  (b)
  (b) edge[e4c edge]  (c)
  (c) edge[e4c edge]  (4)
  (4) edge[e4c edge]  (5)
  (5) edge[e4c edge, dotted]  (z2k)
  (z2k) edge[e4c edge]  (x2)
  ;
\end{tikzpicture}

\caption{A graph $\Theta(2,3,2k+7)$} 
\label{232k+7}
\end{figure}
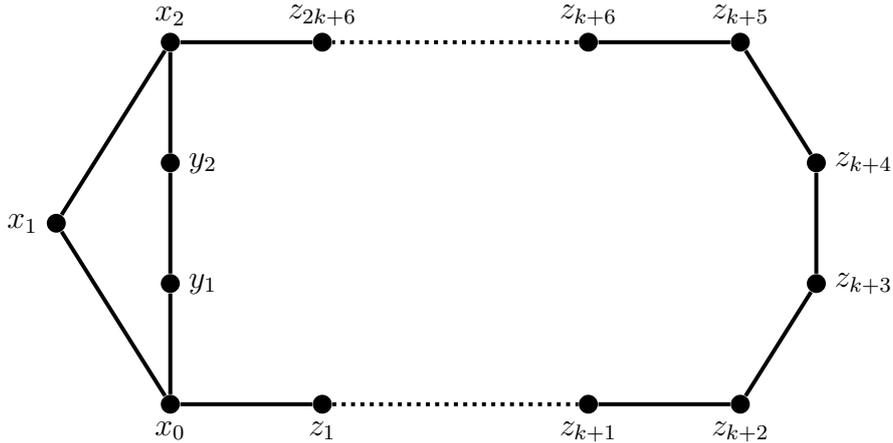

Put
\[
f(x_0)=f(x_2)=243,\quad
f(x_1)=-546,\quad
f(y_1)=f(y_2)=-206,
\]
\[
f(z_1)=f(z_{2k+6})=236,\quad
f(z_{k+1})=f(z_{k+6})=119,
\]
\[
f(z_{k+2})=f(z_{k+5})=234,\quad
f(z_{k+3})=f(z_{k+4})=-353
\]
and $f(x)=0$ for all other vertices. Let $\tilde{D}$ be the distance submatrix of the graph $\Theta(2,3,2k+7)$ with rows and columns indexed by $$z_{2k+6},x_0,x_1,x_2,y_1,y_2,z_1,z_{k+1},\ldots,z_{k+6}.$$

Then
$$\sum_{x,y\in V}d(x,y)f(x)f(y)=\langle g,\tilde{D}g\rangle=16272,$$
where

$$g=\begin{bmatrix}
236 &
    243 &
    -546 &
    243 &
    -206 &
    -206 &
    236 &
    119 &
    234 &
    -353 &
    -353 &
    234&
    119
\end{bmatrix}^\top$$
and

$$\tilde{D}=A+k\begin{bmatrix}
\mathbf{0}_{7\times 7} & \mathbf{1}_{7\times 6}\\[2pt]
\mathbf{1}_{6\times 7} & \mathbf{0}_{6\times 6}
\end{bmatrix},$$
where
\[
A=\begin{bmatrix}
0 & 3 & 2 & 1 & 3 & 2 & 4 & 4 & 4 & 3 & 2 & 1 & 0\\
3 & 0 & 1 & 2 & 1 & 2 & 1 & 1 & 2 & 3 & 4 & 4 & 3 \\
2 & 1 & 0 & 1 & 2 & 2 & 2 & 2 & 3 & 4 & 4 & 3 & 2 \\
1 & 2 & 1 & 0 & 2 & 1 & 3 & 3 & 4 & 4 & 3 & 2 & 1 \\
3 & 1 & 2 & 2 & 0 & 1 & 2 & 2 & 3 & 4 & 5 & 4 & 3 \\
2 & 2 & 2 & 1 & 1 & 0 & 3 & 3 & 4 & 5 & 4 & 3 & 2 \\
4 & 1 & 2 & 3 & 2 & 3 & 0 & 0 & 1 & 2 & 3 & 4 & 4  \\
4 & 1 & 2 & 3 & 2 & 3 & 0 & 0 & 1 & 2 & 3 & 4 & 5 \\
4 & 2 & 3 & 4 & 3 & 4 & 1 & 1 & 0 & 1 & 2 & 3 & 4 \\
3 & 3 & 4 & 4 & 4 & 5 & 2 & 2 & 1 & 0 & 1 & 2 & 3 \\
2 & 4 & 4 & 3 & 5 & 4 & 3 & 3 & 2 & 1 & 0 & 1 & 2 \\
1 & 4 & 3 & 2 & 4 & 3 & 4 & 4 & 3 & 2 & 1 & 0 & 1 \\
0 & 3 & 2 & 1 & 3 & 2 & 4 & 5 & 4 & 3 & 2 & 1 & 0 \\
\end{bmatrix}.
\]

Thus, the distance matrices of graphs $\Theta(2,3,2k+7)$ for $k\ge1$ are not conditionally negative definite, hence these graphs are of non-QE class. This concludes the proof.
\end{proof}

\section*{Acknowledgements}

The author would like to thank Wojciech Młotkowski for helpful discussions.

\bibliographystyle{plain}
\bibliography{bibliography}

\end{document}